\newtheorem{thm}{Theorem}[section]
\newtheorem{lem}{Lemma}[section]
\newcommand{\p}{\partial}
\newcommand{\f}{\frac}
\newcommand{\up}{\textup}
\newcommand{\tr}{\triangle}
\journal{*******************}
\begin{document}

\begin{frontmatter}

\title{Numerical analysis of growth-mediated autochemotactic pattern formation in self-propelling bacteria}

\author[Aaddress]{Jiansong Zhang}
\ead{jszhang@upc.edu.cn}
\author[Baddress]{Maosheng Jiang}
\ead{msjiang@qdu.edu.cn}
\author[Caddress]{Jiang Zhu}
\ead{jiang@lncc.br}
\author[Daddress]{Xijun Yu}
\ead{yuxj@iapcm.ac.cn}
\author[Eaddress]{Luiz Bevilacqua}
\ead{bevilacqua@coc.ufrj.br}

\address[Aaddress]{Department of Applied Mathematics,
        China University of Petroleum, Qingdao 266580, China}
\address[Baddress]{School of Mathematics and Statistics, Qingdao University, Qingdao 266071,  China.}
\address[Caddress]{Laborat\'{o}rio Nacional de Computa\c{c}\~{a}o Cient\'{i}fica, MCTI, Avenida Get\'{u}lio Vargas 333,  Petr\'{o}polis, 25651-075, RJ, Brazil}

\address[Daddress]{Laboratory of Computational Physics, Institute of Applied Physics and Computational Mathematics, Beijing 100088,  China}
 \address[Eaddress]{A.L. Coimbra Institute, COPPE, Federal University of Rio de Janeiro, Rio de Janeiro, Brazil}

\cortext[io]{Corresponding author. Maosheng Jiang}

\begin{abstract}
In this paper,  a decoupled characteristic Galerkin finite element procedure is provided for simulating growth-mediated autochemotactic pattern formation in self-propelling bacteria.  In this procedure,  a modified characteristic Galerkin method is established to solve the bacterial density equation, while  the classical finite element procedure is considered for the self-secreted chemical density and polarization dynamics equations system.   The convergence of this proposed method is considered  under some regularity assumptions and the corresponding error estimate is derived. Numerical experiments are carried out to support the theoretical analysis. Furthermore, several new wave type pattern formations are found.
\end{abstract}

\begin{keyword}
Self-propulsion; Wave pattern;  Autochemotactic pattern formation; Error estimate;  Convergence analysis.
\end{keyword}

\end{frontmatter}


\section{Introduction}

In order to investigate the spatiotemporal dynamics of a microbial colony\cite{MP2018,Toner1995,Budrene1991,HubbardEM2004,RauchPLA1995}, a spatially extended system of three variables  with an additional term corresponding to population growth is considered: polarization  ${\bf p}$,  the bacterial density $\rho$ and the self-secreted chemical density $c$. The reproduction and death of bacteria are modeled by considering a classic logistic growth term, which is governed by the following partial differential equations as in \cite{MP2018,Liebchen2015,Liebchen2017}: \[
\left\{\begin{split}
&(\up{a})\quad\f{\p \rho}{\p t}=-\nabla\cdot(\rho v_{0}\mathbf{p})+D_{\rho}\nabla^{2}\rho+\alpha \rho(1-\rho/K),\quad \textrm{in}\quad \Omega,\\
&(\up{b})\quad\f{\p c}{\p t}=D_{c}\nabla^2c+k_{0}\rho-k_{d}c+k_{a}\nabla\cdot(\rho\mathbf{p}),\quad \textrm{in}\quad \Omega,\\
&(\up{c})\quad\f{\p \mathbf{p}}{\p t}=-\gamma\mathbf{p}+D_{p}\nabla^2\mathbf{p}+\beta\nabla c-\gamma_2|\mathbf{p}|^2\mathbf{p},\quad \textrm{in}\quad \Omega,\\
\end{split}\right.
\]
where $\Omega\subset R^{2}$, is a bounded rectangular domain; 
 $v_{0}$ is a constant and denotes the self-propulsion speed of the bacteria; $D_{\rho}$ is the diffusion coefficient, $\alpha$ stands for the growth rate, $K$ is the carrying capacity of the bacteria; $\beta$ indicates the chemotactic strength where positive $\beta$  represents chemoattraction, while negative $\beta$ represents chemorepulsion. $\gamma$ is the decay rate of $\mathbf{p}$ and $D_{p}$ is the translational diffusion constant.  $\gamma_{2}|\mathbf{p}|^2\mathbf{p}$  ensures the saturation of polarization at strong alignment. At a local rate $k_{0}$ the chemical substance is produced from bacteria  and naturally degraded at a rate $k_{d}$. The term $k_{a}\nabla\cdot(\rho\mathbf{p})$ describes an anisotropic correction to the isotropic secretion term $k_{0}\rho$.

Set $\tilde{t}=k_{d}t$, , $\tilde{\rho}=\rho /K$,  and $\tilde{\mathbf{p}}=v_{0}/\sqrt{k_{d}D_{\rho}}\mathbf{p}$, $\tilde{c}=(k_{d}/k_{0}K)c$. So we can rewrite the above system into an equivalent form in dimensionless quantities as follows
\begin{equation}\label{eq1}
\left\{\begin{split}
&(\up{a})\quad\f{\p \rho}{\p t}=-\nabla\cdot(\rho \mathbf{p})+\nabla^{2}\rho+g\rho(1-\rho),\quad \textrm{in}\quad \Omega,\\
&(\up{b})\quad\f{\p c}{\p t}=\mathcal{D}_{c}\nabla^2c+\rho -c+k\nabla\cdot(\rho\mathbf{p}),\quad \textrm{in}\quad \Omega,\\
&(\up{c})\quad\f{\p \mathbf{p}}{\p t}=-\Gamma\mathbf{p}+\mathcal{D}_{p}\nabla^2\mathbf{p}+s\nabla c-\Gamma_2|\mathbf{p}|^2\mathbf{p},\quad \textrm{in}\quad \Omega,\\
\end{split}\right.
\end{equation}
where $\Gamma=\gamma/k_{d}$, $\mathcal{D}_{p}=D_{p}/D_{\rho}$, $k=k_{a}k_{d}/(k_{0}v_{0})$, $g=\alpha/k_{d}$ and $\Gamma_{2}=\gamma_{2}D_{\rho}/v^{2}_{0}$, the initial conditions are given by
\begin{equation}\label{eq2}
 \rho(x,0)=\rho^0(x),\quad  c(x,0)=c^{0}(x), \quad\mathbf{p}(x,t)=\mathbf{p}^0(x)
\end{equation}
and the periodic boundary condition is considered.

In our knowledge, there are few research works on numerical analysis of growth-mediated autochemotactic pattern formation in self-propelling bacteria.  There are several interesting properties about the system,  one is that  many pattern formation will emerge before arriving at the steady state; the other is that, with the dynamic evolution process driven by the hydrolysis of ATP, the total free energy of the system probably increases or decreases. Therefore, many effective numerical schemes which keep energy unconditional stability, such as in \cite{Zhao2016, Zhao2017, Yang2017, Shen2015, Gong2020, Guo2017,Jiang2022,Jiang2021}, could not be used. As we know,  for the bacteria density equation, it is a parabolic-type equation and nonlinear convection-reaction-diffusion equation. Although there are many literature to deal with the  convection-dominated \cite{Bell1988, Johnson1986, YANG2000,dou,zhang2016} or  reaction-diffusion problem, focusing on this model there are few work. 
To obtain a better approximation, we propose a new modified characteristic Galerkin finite element method for the bacterial density equation  as in \cite{rui2010, Jiang2019,Jiang2021}. Meanwhile,  the classical Galerkin finite element method could be exploited to approximate the average polarization  and chemical density. As we pointed out, the new combined characteristic finite element method takes full advantages of the finite element methods and keeps the mass balance. Theoretically, we considered the convergence analysis, and derive the corresponding error estimate. Numerically, we also provide some numerical experiments to verify our theoretical results, and simulate the growth-mediated autochemotactic pattern formation in self-propelling bacteria.

We organize this paper as follows. Firstly, we will establish our new characteristic finite element method for the growth-mediated autochemotactic pattern formation in self-propelling bacteria in Section 2. Secondly, we consider the convergence  of the novel method, and derive the corresponding error estimate in Section 3.  And then, we find several wave pattern formation in the chemorepulsion regimes in Section 4.  At last, we draw some conclusions in Section 5.

\section{The formulation of numerical method}
\setcounter{equation}{0}
In this section, we will give the novel characteristic finite element method.  For this purpose, we denote the characteristic direction by $\tau$ such that
\[
\phi\f{\p }{\p\tau}=\f{\p }{\p t}+\mathbf{p}\cdot\nabla,\quad \phi=\sqrt{1+|\mathbf{p}|^2}.
\]
So  \eqref{eq1}  can be rewritten into the following equivalent form
 \begin{equation}\label{eq3}
\begin{split}
&(\up{a})\quad\phi\f{\p \rho}{\p \tau}=-\nabla\cdot\mathbf{p}\rho+\nabla^{2}\rho+g\rho(1-\rho),\\
&(\up{b})\quad \f{\p c}{\p t}=\mathcal{D}_{c}\nabla^2c+\rho -c+k\nabla\cdot(\rho\mathbf{p}),\\
&(\up{c})\quad\f{\p \mathbf{p}}{\p t}=\mathcal{D}_{p}\nabla^{2}\mathbf{p}-\Gamma \mathbf{p}-\Gamma_2|\mathbf{p}|^2\mathbf{p}+s\nabla c.
\end{split}
\end{equation}

To derive the time discrete scheme of system \eqref{eq1}, we set $N$ some positive integer, denote time increment $\tr t=T/N$. Define a uniform time partition: $0=: t_0<t_1<\cdots <t_n=n\tr t<\cdots <t_{N-1}<t_{N}:=T$. As in \cite{dou}, we know that  we can use the following formula to approximate the characteristic derivative
\[
\phi\f{\p \rho}{\p\tau}\approx \sqrt{1+|\mathbf{p}^{n-1}|^2}\frac{\rho^{n}- \tilde{\rho}^{n-1}}{\sqrt{(X^{n}-x)^{2}+(\tr t)^{2}}}
=\frac{ \rho^{n}-\tilde{\rho}^{n-1}}{\tr t}
\]
where $\tilde{\rho}^{n-1}=\rho(X^{n})$, $X^n=x-\mathbf{p}^{n-1}\tr t$.  Hence we get the  semi-discrete formulation in time of \eqref{eq1}(a) as follows
 \begin{equation}\label{eq4:1}
 \f{\rho^n-\tilde{\rho}^{n-1}}{\tr t}+\nabla\cdot\mathbf{p}^{n-1}\rho^{n}-\nabla^{2}\rho^n-g\rho^{n}(1-\rho^{n-1})=0.
\end{equation}
By modifying the characteristic approximation as in \cite{rui2010, Jiang2019}, 
\[
\frac{\rho^{n}- \tilde{\rho}^{n-1}\delta^{n}}{\tr  t}
\approx
\f{\p \rho }{\p t}+\nabla\cdot(\mathbf{p}\rho),
\quad\delta^{n}=\det(\f{\p X^{n}}{\p x}),
\]
 we  construct a new  semi-discrete formulation in time
 \begin{equation}\label{eq7}
\f{\rho^n-\tilde{\rho}^{n-1}\delta^n}{\tr t}-\nabla^{2}\rho^n=g\rho^{n}(1-\rho^{n-1}).
\end{equation}
\begin{thm}[Mass balance]
The discrete scheme \eqref{eq7} keeps mass balance.
\end{thm}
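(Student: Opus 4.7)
The plan is to show that integrating the scheme \eqref{eq7} over $\Omega$ reproduces the discrete analogue of the continuous mass balance law, namely
\[
\frac{\int_\Omega \rho^n \, dx - \int_\Omega \rho^{n-1} \, dx}{\Delta t} = g\int_\Omega \rho^n(1-\rho^{n-1})\, dx,
\]
which is what one obtains by integrating \eqref{eq1}(a) under periodic boundary conditions.

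First I would integrate \eqref{eq7} term by term over $\Omega$. The diffusion contribution $\int_\Omega \nabla^{2}\rho^{n}\, dx$ vanishes by the divergence theorem together with the periodic boundary condition, so only the discrete time-derivative and the reaction term remain. The right-hand side already has the desired form, hence the whole statement reduces to showing the identity
\[
\int_\Omega \tilde{\rho}^{n-1}\delta^{n}\, dx \;=\; \int_\Omega \rho^{n-1}\, dx.
\]

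The key step — and the main obstacle — is this change-of-variables identity. By construction $\tilde{\rho}^{n-1}(x) = \rho^{n-1}(X^n(x))$ with $X^n(x) = x - \mathbf{p}^{n-1}\Delta t$ and $\delta^n = \det(\partial X^n/\partial x)$. For $\Delta t$ sufficiently small the map $X^n:\Omega\to\Omega$ is a diffeomorphism (understood on the periodic torus so that characteristics exiting one side re-enter on the opposite side), and $\delta^n > 0$. Setting $y = X^n(x)$ yields $dy = \delta^n\, dx$, and since the transformation is a bijection of $\Omega$ onto itself,
\[
\int_\Omega \tilde{\rho}^{n-1}(x)\,\delta^{n}(x)\, dx = \int_\Omega \rho^{n-1}(X^n(x))\,\delta^{n}(x)\, dx = \int_\Omega \rho^{n-1}(y)\, dy.
\]
This is precisely why the Jacobian factor $\delta^n$ is inserted in \eqref{eq7}; it is the correction absent from the naive characteristic scheme \eqref{eq4:1} that makes the transported mass coincide exactly with the mass at the previous step.

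Combining the two ingredients — vanishing of the diffusion integral by periodicity, and the Jacobian identity for the transported density — gives
\[
\frac{1}{\Delta t}\Bigl(\int_\Omega \rho^{n}\, dx - \int_\Omega \rho^{n-1}\, dx\Bigr) = g\int_\Omega \rho^{n}(1-\rho^{n-1})\, dx,
\]
which is exactly the discrete mass balance corresponding to the continuous conservation law obtained from \eqref{eq1}(a). The only nontrivial ingredient is the bijectivity of $X^n$ on the periodic cell, which I would either state as a standing smallness assumption on $\Delta t\,\|\mathbf{p}^{n-1}\|_{W^{1,\infty}}$ or cite from the characteristic-method references \cite{rui2010,Jiang2019} already invoked in the construction.
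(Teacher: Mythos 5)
Your proposal is correct and follows essentially the same route as the paper: integrate \eqref{eq7} over $\Omega$, use periodicity to kill the diffusion term, and apply the change of variables $y=X^{n}(x)$ with Jacobian $\delta^{n}$ (together with the fact that $X^{n}$ maps the periodic cell onto itself) to identify $\int_{\Omega}\tilde{\rho}^{n-1}\delta^{n}\,dx$ with $\int_{\Omega}\rho^{n-1}\,dx$. Your explicit remark on the smallness condition guaranteeing bijectivity of $X^{n}$ is a point the paper leaves implicit, but it does not change the argument.
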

\begin{proof}
Integrating \eqref{eq1}(a) over $t$, we can obtain the semi-discrete mass balance equation
\begin{equation}\label{eq5}
\int_{\Omega}\rho^ndx=\int_{\Omega}\rho^{n-1}dx+\int_{\Omega}g\rho^{n}(1-\rho^{n-1})dx.
\end{equation}
Integrating \eqref{eq7} with respect to $x$ on $\Omega$, we have
\begin{equation}\label{eq8}
\int_{\Omega}\rho^ndx=\int_{\Omega}\tilde{\rho}^{n-1}\delta^ndx+\int_{\Omega}g\rho^{n}(1-\rho^{n-1})dx.
\end{equation}
Using  the periodic assumption and the inverse transformation, we know that
\[
\int_{\Omega}\tilde{\rho}^{n-1}\delta^ndx = \int_{X^{n}(\Omega)}\rho^{n-1}(y)\delta^{n}(\delta^{n})^{-1}dy=\int_{\Omega}\rho^{n-1}dx,
\]
this equation suggests that \eqref{eq7} keeps mass balance.
\end{proof}

Based on \eqref{eq7}, and time backward Euler difference scheme in time, we can get the following weak variational form 
 \begin{equation}\label{eq9}
\begin{split}
&(\up{a})\quad (\f{\rho^n-\tilde{\rho}^{n-1}\delta^{n}}{\tr t},v)+(\nabla\rho^n,\nabla v)=(g\rho^{n}(1-\rho^{n-1}),v) \quad v\in H^{1}(\Omega),\\
&(\up{b})\quad (\f{c^{n}-c^{n-1}}{\tr t},z)+(\mathcal{D}_{c}\nabla c^n,\nabla z)+(c^n,z)=(\rho^{n}+k\nabla\cdot(\rho^n\mathbf{p}^n),z) \quad z\in H^{1}(\Omega),\\
&(\up{c})\quad(\f{ \mathbf{p}^n-\mathbf{p}^{n-1}}{\tr t},\mathbf{q})+(\mathcal{D}_{p}\nabla\mathbf{p}^n,\nabla\mathbf{q})+([\Gamma +\Gamma_2|\mathbf{p}^{n-1}|^2]\mathbf{p}^n,\mathbf{q})=(s\nabla c^{n},\mathbf{q}) \quad\mathbf{q}\in [H^{1}(\Omega)]^{2}.
\end{split}
\end{equation}

Denote the uniform regular  partition of the domain $\Omega$ by $\mathcal{T}_{h}$, where the elements in the partition have the diameters bounded by $h$. And then, define two $r$-degree piecewise polynomial finite element spaces ${\mathcal W}_{h}\subset [H^1(\Omega)]^2$ and ${\mathcal V}_{h}\subset H^1(\Omega)$. Based on the weak variational form \eqref{eq9}, we propose the linear, decoupled characteristic Galerkin method for model problem \eqref{eq1}.

 \subsection*{\bf CFE Algorithm}
 Given initial condition $(\rho^0_h,\mathbf{p}^0_h,c^0_{h})\in\mathcal{V}_h\times\mathcal{W}_h\times\mathcal{V}_h$,  for $n=1,2,\ldots, N$, seek $(\rho^n_h,\mathbf{p}^n_h, c^{n}_{h})\in\mathcal{V}_h\times\mathcal{W}_h\times\mathcal{V}_h$, such that
\begin{equation}\label{eq12}
\begin{split}
&(\up{a})\quad(\f{\rho^n_{h}-\tilde{\rho}^{n-1}_{h}\delta^n_{h}}{\tr t},v_{h})+(\nabla\rho^n_{h},\nabla v_{h})=(g\rho^{n}_{h}(1-\rho^{n-1}_{h}),v_{h})\quad \forall v_{h} \in \mathcal{V}_h,\\
&(\up{b})\quad (\f{c^{n}_{h}-c^{n-1}_{h}}{\tr t},z_{h})+(\mathcal{D}_{c}\nabla c^n_{h},\nabla z_{h})+(c^n_{h},z_{h})=(\rho^{n}_{h}+k\nabla\cdot(\rho^n_{h}\mathbf{p}^{n-1}_{h}),z_{h})\quad \forall z_{h} \in \mathcal{V}_h,\\
&(\up{c})\quad(\f{ \mathbf{p}^n_{h}-\mathbf{p}^{n-1}_{h}}{\tr t},\mathbf{q}_{h})+(\mathcal{D}_{p}\nabla\mathbf{p}^n_{h},\nabla\mathbf{q}_{h})+([\Gamma +\Gamma_2|\mathbf{p}^{n-1}_{h}|^2]\mathbf{p}^n_{h},\mathbf{q}_{h})=(s\nabla c^{n}_{h},\mathbf{q}_{h})\quad \mathbf{q}_{h}\in\mathcal{W}_h,
\end{split}
\end{equation}
where $\tilde{\rho}^{n-1}_{h}=\rho(X^{n}_{h}), X^n_h=x-\mathbf{p}^{n-1}_h\tr t$,  $\delta^{n}_h=\det(\p X^{n}_h/\p x)$.

For CFE Algorithm, we can get the main convergence theorem as follows:
\begin{thm}\label{thm1}
Let $(\rho^n_h,\mathbf{p}^n_h,c^{n}_{h})$ be the solution of  CFE Algorithm. If the regularities of  the solution $(\rho^n,\mathbf{p}^n,c^{n})$ of the model problem \eqref{eq1}: $
\rho\in l^\infty(W^{1,\infty}), \rho_t\in l^2(H^{r+1}),\rho_{tt}\in l^2(L^{2}),\mathbf{p}\in l^\infty(W^{1,\infty}), \mathbf{p}_t\in l^2(H^{r+1}),\mathbf{p}_{tt}\in l^2(L^2)$, $
c\in l^\infty(W^{1,\infty}), c_t\in l^2(H^{r+1}),c_{tt}\in l^2(L^{2})$ hold,  the following error estimate holds
\begin{equation}\label{eq22}
\max_{n}\|\rho^n-\rho^n_h\|_{L^2}+\max_{n}\|\mathbf{p}^n-\mathbf{p}^n_h\|_{L^2}+\max_{n}\|\bm\sigma^n-\bm\sigma^n_h\|_{L^2}\leq C\{h^{r+1}+\tr t\}.
\end{equation}
\end{thm}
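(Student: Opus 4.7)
The plan is to introduce Ritz (elliptic) projections $R_h$ onto the finite element spaces and split each error in the standard way:
$\rho^n-\rho^n_h = (\rho^n-R_h\rho^n)+(R_h\rho^n-\rho^n_h) =: \eta^n_\rho+\xi^n_\rho$,
and analogously $c^n-c^n_h=\eta^n_c+\xi^n_c$, $\mathbf{p}^n-\mathbf{p}^n_h=\boldsymbol\eta^n_p+\boldsymbol\xi^n_p$. Under the assumed $l^2(H^{r+1})$ regularity, the projection errors are controlled by $C h^{r+1}$ in $L^2$ and $C h^{r}$ in $H^1$ by standard approximation theory, so the entire task reduces to bounding the discrete components $\xi^n_\rho$, $\xi^n_c$, $\boldsymbol\xi^n_p$ by $C(h^{r+1}+\Delta t)$.

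Next I would derive the error equations by subtracting the CFE algorithm from the continuous weak form tested against finite-element functions. The time-truncation analysis has two different flavours: for equations (b) and (c) standard backward Euler/Taylor expansion gives a consistency error bounded by $C\Delta t\,(\|c_{tt}\|_{L^2(t_{n-1},t_n;L^2)}+\|\mathbf{p}_{tt}\|_{L^2(t_{n-1},t_n;L^2)})$; for equation (a) one must compare the modified characteristic quotient $(\rho^n-\tilde\rho^{n-1}\delta^n)/\Delta t$ with $\rho_t+\nabla\!\cdot\!(\rho\mathbf{p})$, following the argument in \cite{rui2010,Jiang2019}, which again yields an $O(\Delta t)$ truncation bound using $\rho_t\in l^2(H^{r+1})$, $\rho_{tt}\in l^2(L^2)$ and the $W^{1,\infty}$ bound on $\mathbf{p}$. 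Then, taking $v_h=\xi^n_\rho$, $z_h=\xi^n_c$, $\mathbf{q}_h=\boldsymbol\xi^n_p$, the three equations produce energy identities of the form
\[
\tfrac{1}{2\Delta t}\bigl(\|\xi^n_\star\|^2-\|\xi^{n-1}_\star\|^2\bigr)+D_\star\|\nabla\xi^n_\star\|^2 \;\le\; \textrm{(projection + truncation + nonlinear cross terms)}.
\]

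The hard part will be to handle three interlocking difficulties simultaneously. First, the nonlinearities $g\rho^n(1-\rho^{n-1})$, $\Gamma_2|\mathbf{p}^{n-1}|^2\mathbf{p}^n$, and the convective source $k\nabla\!\cdot\!(\rho^n\mathbf{p}^{n-1})$ in (b) require a priori $L^\infty$ bounds on $\rho^n_h$ and $\mathbf{p}^n_h$, which do not come for free; I would close them by mathematical induction on $n$, assuming $\|\xi^{k}_\rho\|+\|\boldsymbol\xi^{k}_p\|\le C(h^{r+1}+\Delta t)$ for $k\le n-1$ and using an inverse inequality together with a mild mesh condition of the form $h^{r+1}+\Delta t = o(h^{d/2})$ to upgrade this bound to $L^\infty$. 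Second, the characteristic term $\tilde\rho^{n-1}_h\delta^n_h$ couples $\xi^n_\rho$ to $\boldsymbol\xi^{n-1}_p$ through the foot-of-characteristic map $X^n_h=x-\mathbf{p}^{n-1}_h\Delta t$ and its Jacobian $\delta^n_h$; using the change-of-variables argument already employed in the mass-balance theorem, together with Lipschitz estimates $\|X^n-X^n_h\|\le \Delta t\|\boldsymbol\xi^{n-1}_p+\boldsymbol\eta^{n-1}_p\|$ and $|\delta^n-\delta^n_h|\le C\Delta t\|\nabla(\boldsymbol\xi^{n-1}_p+\boldsymbol\eta^{n-1}_p)\|$, this produces an error contribution bounded by $C\Delta t\,(\|\boldsymbol\xi^{n-1}_p\|+h^{r+1})$ which can be absorbed into the Gronwall step. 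Third, the cross-couplings $s\nabla c^n$ in (c) and $\rho^n\mathbf{p}^{n-1}$ in (b) mean the three bounds cannot be proved independently; I would instead add the three energy identities, apply Cauchy–Schwarz and Young's inequality to absorb every $\|\nabla\xi^n_\star\|^2$ term into the diffusion contribution, and then sum over $n=1,\dots,m$.

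Finally, applying the discrete Gronwall inequality to the summed estimate yields
\[
\max_{1\le n\le N}\bigl(\|\xi^n_\rho\|^2+\|\xi^n_c\|^2+\|\boldsymbol\xi^n_p\|^2\bigr)+\Delta t\sum_{n=1}^{N}\bigl(\|\nabla\xi^n_\rho\|^2+\|\nabla\xi^n_c\|^2+\|\nabla\boldsymbol\xi^n_p\|^2\bigr)\le C(h^{r+1}+\Delta t)^2,
\]
which closes the induction (verifying the $L^\infty$ hypothesis a posteriori) and, combined with the projection error bounds via the triangle inequality, gives the claimed estimate \eqref{eq22}.
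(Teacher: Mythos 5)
Your proposal follows essentially the same route as the paper: elliptic-projection splitting, energy estimates obtained by testing each error equation with its own discrete error component, the change-of-variables/Jacobian-expansion treatment of the characteristic term from \cite{rui2010}, an inductive $L^\infty$ hypothesis on $\rho^n_h$ and $\mathbf{p}^n_h$ closed a posteriori via an inverse inequality, and a final discrete Gronwall argument. The only cosmetic difference is that you add the three energy identities before applying Gronwall, whereas the paper states them as three separate lemmas first; your explicit mention of the mesh condition needed to close the inverse-inequality step is in fact slightly more careful than the paper's own treatment.
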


\section{Convergence analysis}
\setcounter{equation}{0}

In order to prove our convergence theorem, as in \cite{PGC}, we introduce two elliptic projection operators $\Pi_h$ and $\Pi_p$ such that
\begin{equation}\label{eq13}
(\nabla w,\nabla v_h)=(\nabla \Pi_hw, \nabla v_h),\quad \forall v_h\in \mathcal{V}_h,\quad (\nabla \mathbf{p},\nabla  \mathbf{q}_h)=(\nabla \Pi_p \mathbf{p},\nabla \mathbf{q}_h),\quad \forall \mathbf{q}_h\in \mathcal{W}_h.
\end{equation}
The following approximate properties hold
\begin{equation}\label{eq14}
\begin{split}
&(\textup{a})\quad\| w-\Pi_{h} w\|_{L^2}+h\|\nabla(w-\Pi_{h}w)\|_{[L^2]^2}\leq Ch^{r+1}\| w\|_{H^{r+1}},\quad \forall w\in H^{r+1}(\Omega),\\
&(\textup{b})\quad\|\mathbf{p}-\Pi_p \mathbf{p}\|_{[L^2]^2}+h\|\nabla(  \mathbf{p}-\Pi_p \mathbf{p})\|_{[L^2]^2}\leq Ch^{r+1}\|\mathbf{p}\|_{[H^{r+1}]^2},\quad \forall \mathbf{p}\in \in [H^{r+1}(\Omega)]^{2}.
\end{split}
\end{equation}
And the estimates can be easily obtained as follows:
\begin{equation}\label{eq14-1}
\begin{split}
&(\textup{a})\quad\| (w-\Pi_{h} w)_{t}\|_{L^2}\leq Ch^{r+1}\| w_{t}\|_{H^{r+1}},\quad \forall w\in H^{r+1}(\Omega),\\
&(\textup{b})\quad\|(\mathbf{p}-\Pi_p \mathbf{p})_{t}\|_{[L^2]^2}\leq Ch^{r+1}\|\mathbf{p}_{t}\|_{[H^{r+1}]^2},\quad \forall \mathbf{p}\in \in [H^{r+1}(\Omega)]^{2}.
\end{split}
\end{equation}

Set 
\[
\rho^n_h-\rho^n=(\rho^n_h-\Pi_{h}\rho^n)-(\rho^n-\Pi_{h}\rho^n)=\theta^n_{\rho}-\zeta^n_{\rho},
\]
\[
\mathbf{p}^n_h-\mathbf{p}^n=(\mathbf{p}^n_h-\Pi_p\mathbf{p}^n)-(\mathbf{p}^n-\Pi_p\mathbf{p}^n)=\theta^n_{p}-\zeta^n_{p}, 
\]
\[c^{n}_{h}-c^{n}=(c^{n}_{h}-\Pi_{h}c^{n})-(c^{n}-\Pi_{h}c^{n})=\theta_{c}^{n}-\zeta^{n}_{c}.
\]
Subtracting \eqref{eq3} from \eqref{eq12}, we can get the following error residual equations
\begin{equation}\label{eq18}
\begin{split}
&(\f{\theta^n_{\rho}-\tilde{\theta}^{n-1}_{\rho}\delta^n_h}{\tr t},v_h)+(\nabla\theta^n_{\rho},\nabla v_h)
\\=&(\f{\zeta^n_{\rho}-\tilde{\zeta}^{n-1}_{\rho}\delta^n_h}{\tr t},v_h)+(\f{\tilde{\rho}^{n-1}_h(\delta^n_h-\delta^n)}{\tr t},v_h)
+(g\rho^{n}_{h}(1-\rho^{n-1}_{h})-g\rho^{n}(1-\rho^{n}),v_{h})
\\&+(\f{(\tilde{\rho}^{n-1}_h-\tilde{\rho}^{n-1})\delta^n}{\tr t},v_h)
+(\f{\p \rho^n}{\p t}-\f{\rho^n-\tilde{\rho}^{n-1}\delta^n}{\tr t},v_h),
\end{split}
\end{equation}
\begin{equation}\label{eq21}
\begin{split}
(\f{\theta^{n}_{c}-\theta^{n-1}_{c}}{\tr t},z_{h})+(\mathcal{D}_{c}\nabla\theta_{c}^n,\nabla z_h)+(\theta_{c}^{n},z_{h})
=&(\f{\zeta^{n}_{c}-\zeta^{n-1}_{c}}{\tr t},z_{h})+(\f{\p c^n}{\p t}-\f{c^n-c^{n-1}}{\tr t},z_h)+(\zeta_{c}^{n},z_{h})
\\&
+(\rho^{n}_{h}-\rho^{n},z_{h})+(k\nabla\cdot(\rho^n_{h}\mathbf{p}^{n-1}_{h}-\rho^n\mathbf{p}^n),z_{h}),
\end{split}
\end{equation}
and
\begin{equation}\label{eq19}
\begin{split}
(\f{ \theta_{p}^n-\theta_{p}^{n-1}}{\tr t},\mathbf{q}_h)+(\mathcal{D}_{p}\nabla\theta_{p}^n,\nabla\mathbf{q}_h)
=&(\f{ \zeta_{p}^n-\zeta_{p}^{n-1}}{\tr t},\mathbf{q}_h)+(\f{ \p\mathbf{p}^n}{\p t}-\f{ \mathbf{p}^n-\mathbf{p}^{n-1}}{\tr t},\mathbf{q}_h)-(\Gamma[\mathbf{p}^n_h-\mathbf{p}^n],\mathbf{q}_h)
\\&-(\Gamma_2[|{\bf{P}}^{n-1}_h|^2\mathbf{p}^n_h-|{\bf{P}}^{n}|^2\mathbf{p}^n],\mathbf{q}_h)+(s\nabla (c^{n}_{h}-c^{n}),\mathbf{q}_{h}).
\end{split}
\end{equation}

For the proof of Theorem \ref{thm1}, the inductive hypothesis is necessary
\begin{equation}\label{eq23}
\max_n\|\mathbf{p}^n_h\|_{L^{\infty}}+\max_n\|\rho^n_h\|_{L^{\infty}}\leq C<+\infty.
\end{equation}
Now we estimate the boundedness of $\theta^n_{\rho}$, $\theta^n_{p}$ and $\theta^n_{c}$ one by one when the hypothesis \eqref{eq23} holds until $k=n-1$. 

\subsection{The estimate of $\theta^n_{p}$}
\begin{lem}\label{lem31}
For $\theta^n_{p}$, we have the estimate
\begin{equation}\label{eq25-1}
\begin{split}
\|\theta^n_{p}\|^2_{L^2}+\tr t\sum^n_{i=1}\|\nabla\theta_{p}^i\|^2_{L^2}
\leq &C\{\tr t\sum^n_{i=1}[\|\theta_{p}^i\|^2_{L^2}+\|\theta^{i}_{c}\|^2_{L^2}]+h^{2r+2}+\tr t^{2}\}.
\end{split}
\end{equation}
\end{lem}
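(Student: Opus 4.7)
The plan is to test the residual equation \eqref{eq19} with $\mathbf{q}_h=\theta^n_p$, apply the identity $2(a-b,a)=\|a\|^2-\|b\|^2+\|a-b\|^2$ on the discrete time derivative, and use the coercivity $\mathcal{D}_p\|\nabla\theta_p^n\|_{L^2}^2$ on the left. After discarding the non-negative term $\|\theta_p^n-\theta_p^{n-1}\|^2/(2\tr t)$, I would multiply by $2\tr t$ and telescope the sum from $i=1$ to $n$, so the left-hand side becomes $\|\theta_p^n\|^2+\tr t\sum_{i=1}^n\|\nabla\theta_p^i\|^2$, up to a harmless $\|\theta_p^0\|^2=0$ contribution.

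Next I would bound each of the five right-hand terms of \eqref{eq19} (after taking $\mathbf{q}_h=\theta_p^n$) by Cauchy--Schwarz and Young's inequality, with $\epsilon\|\nabla\theta_p^n\|^2$ or $\epsilon\|\theta_p^n\|^2$ absorbed into the left. For the projection-time-difference term I use
\[
\Big\|\tfrac{\zeta_p^i-\zeta_p^{i-1}}{\tr t}\Big\|_{L^2}^2\le \tfrac{1}{\tr t}\int_{t^{i-1}}^{t^i}\|\zeta_{p,t}\|_{L^2}^2\,d\tau,
\]
which, together with \eqref{eq14-1}(b) and the regularity $\mathbf{p}_t\in l^2(H^{r+1})$, yields $\tr t\sum \|\cdot\|^2\le Ch^{2r+2}$. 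For the Taylor remainder $\p_t\mathbf{p}^i-(\mathbf{p}^i-\mathbf{p}^{i-1})/\tr t$ I invoke the standard pointwise bound $\tr t\int_{t^{i-1}}^{t^i}\|\mathbf{p}_{tt}\|_{L^2}^2\,d\tau$, which summed with the factor $\tr t$ produces the $\tr t^2$ term. The linear term $\Gamma(\mathbf{p}_h^i-\mathbf{p}^i,\theta_p^i)=\Gamma(\theta_p^i-\zeta_p^i,\theta_p^i)$ is immediately absorbed into $C\|\theta_p^i\|^2+Ch^{2r+2}$.

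The main technical step is the cubic nonlinearity, which I would decompose as
\[
|{\bf P}_h^{i-1}|^2\mathbf{p}_h^i-|\mathbf{p}^i|^2\mathbf{p}^i=|{\bf P}_h^{i-1}|^2(\theta_p^i-\zeta_p^i)+\bigl(|{\bf P}_h^{i-1}|^2-|\mathbf{p}^i|^2\bigr)\mathbf{p}^i.
\]
Using the induction hypothesis \eqref{eq23} to bound $\|{\bf P}_h^{i-1}\|_{L^\infty}$ and the regularity $\mathbf{p}\in l^\infty(W^{1,\infty})$ to bound $\|\mathbf{p}^i\|_{L^\infty}$, the first piece contributes $C\|\theta_p^i\|^2+Ch^{2r+2}$. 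For the second, the factorisation $|{\bf P}_h^{i-1}|-|\mathbf{p}^i|\le|\mathbf{p}_h^{i-1}-\mathbf{p}^{i-1}|+|\mathbf{p}^{i-1}-\mathbf{p}^i|\le|\theta_p^{i-1}-\zeta_p^{i-1}|+C\tr t\,\|\mathbf{p}_t\|_{L^\infty}$ produces $C\|\theta_p^{i-1}\|^2+Ch^{2r+2}+C\tr t^{2}$, which after reindexing is swallowed by the $\tr t\sum_i\|\theta_p^i\|^2$ term on the RHS of the lemma.

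The last term $s(\nabla(c_h^i-c^i),\theta_p^i)$ is the one place I must be careful: a direct Cauchy--Schwarz would produce $\|\nabla\theta_c^i\|$, which does not appear in the claimed bound. To avoid it I use the periodic boundary condition to integrate by parts,
\[
s\bigl(\nabla(c_h^i-c^i),\theta_p^i\bigr)=-s\bigl(\theta_c^i-\zeta_c^i,\nabla\!\cdot\theta_p^i\bigr),
\]
and then apply Young's inequality with the $\|\nabla\!\cdot\theta_p^i\|^2$ part absorbed by the $\mathcal{D}_p\|\nabla\theta_p^i\|^2$ on the left, leaving only $C\|\theta_c^i\|^2+Ch^{2r+2}$. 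Assembling all bounds and summing $i=1,\ldots,n$ yields exactly \eqref{eq25-1}. The chief obstacle is the nonlinear term, where one must keep careful track of which norm of which $\theta$ appears and exploit the induction hypothesis \eqref{eq23} plus the integration-by-parts trick to keep the right-hand side clean of gradient terms.
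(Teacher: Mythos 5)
Your proposal is correct and follows essentially the same route as the paper's proof: testing \eqref{eq19} with $\theta^n_p$, bounding the projection and truncation terms in the standard way, splitting the cubic term into a piece controlled by the induction hypothesis \eqref{eq23} and a difference of squares controlled by $\|\theta_p^{n-1}\|_{L^2}$, and, crucially, integrating the $s\nabla(c_h^n-c^n)$ term by parts so that only $\|\theta_c^n\|_{L^2}$ (not its gradient) appears before absorbing $\|\nabla\cdot\theta_p^n\|_{L^2}^2$ into the diffusion term. The only differences are cosmetic (your $|a|-|b|\le|a-b|$ bound versus the paper's $(a+b)\cdot(a-b)$ factorisation).
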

\begin{proof}
Choosing $\mathbf{q}_h=\theta^n_{p}$ in \eqref{eq19}  we have
\begin{equation}\label{eq24-1}
\begin{split}
&\f{1 }{2\tr t}[(\theta^n_{p},\theta^n_{p})-(\theta_{p}^{n-1},\theta_{p}^{n-1})]+(\mathcal{D}_{p}\nabla\theta^n_{p},\nabla\theta^n_{p})
\\\leq&(\f{1}{\tr t}\int^{t_n}_{t_{n-1}}\f{ \p\zeta_{p}}{\p t}dt,\theta^n_{p})+(\f{ \p\mathbf{p}^n}{\p t}-\f{ \mathbf{p}^n-\mathbf{p}^{n-1}}{\tr t},\theta^n_{p})
\\&-(\Gamma[\mathbf{p}^n_h-\mathbf{p}^n]+\Gamma_2[|\mathbf{p}^{n-1}_h|^2\mathbf{p}^n_h-|\mathbf{p}^{n}|^2\mathbf{p}^n],\theta^n_{p})\\&+(s\nabla (c^{n}_{h}-c^{n}),\theta^n_{p})
=E_{1}+E_{2}+E_{3}+E_{4}.
\end{split}
\end{equation}


Note that
\[
E_1+E_{2}\leq C\{\f{1}{\tr t}\int^{t_n}_{t_{n-1}}\|\f{ \p\zeta_{p}}{\p t}\|^2_{L^2}dt+\tr t\int^{t_n}_{t_{n-1}}\|\mathbf{p}_{tt}\|^2_{L^2}dt+\|\theta_{p}^n\|^2_{L^2}\}
\]
and
\[
E_{4}=-(s(c^{n}_{h}-c^{n}),\nabla\cdot\theta^n_{p})
\leq C\{\|\theta_{c}^n\|^2_{L^2}+\|\zeta_{c}^n\|^2_{L^2}\}\}+\f{1}{2}\|\mathcal{D}^{1/2}_{p}\nabla\theta^n_{p}\|^2_{L^2}.
\]

Then, we can give the estimate of $E_3$. We know that
\[
\begin{split}
E_3=&-(\Gamma[\mathbf{p}^n_h-\mathbf{p}^n],\theta_{p}^n)-(\Gamma_2[|\mathbf{p}^{n-1}_h|^2\mathbf{p}^n_h-|\mathbf{p}^{n}|^2\mathbf{p}^n],\theta_{p}^n)
\\=&-(\Gamma [\mathbf{p}^n_h-\mathbf{p}^n],\theta_{p}^n)
-(\Gamma_2|\mathbf{p}^{n-1}_h|^2(\mathbf{p}^n_h-\mathbf{p}^{n}),\theta_{p}^n) 
\\&-(\Gamma_2(\mathbf{p}^{n-1}_h+\mathbf{p}^{n})\cdot\mathbf{p}^n(\mathbf{p}^{n-1}_h-\mathbf{p}^{n}),\theta_{p}^n)\\
\leq &C\{\|\theta_{p}^n\|^2_{L^2}+\|\zeta_{p}^n\|^2_{L^2}+\|\theta^{n-1}_{p}\|^2_{L^2}+\|\zeta_{p}^n\|^2_{L^2}
+\tr t\int^{t_n}_{t_{n-1}}\|\mathbf{p}_{t}\|^2_{L^2}dt\},
\end{split}
\]
where the inductive hypothesis \eqref{eq23} has been used in the last inequality.

Substituting these estimates into \eqref{eq24-1},  we get the estimate
\[
\begin{split}
&\f{1 }{2\tr t}[(\theta^n_{p},\theta^n_{p})-(\theta_{p}^{n-1},\theta_{p}^{n-1})]+\f{1 }{2}(\mathcal{D}_{p}\nabla\theta^n_{p},\nabla\theta^n_{p})
\\\leq&C\{\|\theta^n_{p}\|^2_{L^2}+\|\theta^{n-1}_{p}\|^2_{L^2}+\|\theta^{n}_{c}\|^2_{L^2}+\|\zeta^{n}_{p}\|^2_{L^2}+\|\zeta^{n}_{c}\|^2_{L^2}
\\&+\f{1}{\tr t}\int^{t_n}_{t_{n-1}}\|\f{ \p\zeta_{p}}{\p t}\|^2_{L^2}dt+\tr t\int^{t_n}_{t_{n-1}}(\|\mathbf{p}_t\|^2_{L^2}+\|\mathbf{p}_{tt}\|^2_{L^2})dt\}.
\end{split}
\]
Multiplying it by $2\tr t$, and summing it over $n$, then we can obtain
\begin{equation}\label{eq25}
\begin{split}
\|\theta^n_{p}\|^2_{L^2}+\tr t\sum^n_{i=1}\|\nabla\theta_{p}^i\|^2_{L^2}
\leq &C\{\tr t\sum^n_{i=1}[\|\theta_{p}^i\|^2_{L^2}+\|\zeta_{p}^i\|^2_{L^2}+\|\theta^{i}_{c}\|^2_{L^2}+\|\zeta^{i}_{c}\|^2_{L^2}]
\\&+\|\f{\p \zeta_{p}}{\p t}\|^2_{L^2(0,t_n;L^2)}+\tr t^2(\|\mathbf{p}_t\|^2_{L^2(0,t_n;L^2)}+\|\mathbf{p}_{tt}\|^2_{L^2(0,t_n;L^2)}\}.
\end{split}
\end{equation}
By use of the estimate \eqref{eq14}, we know that the estimate \eqref{eq25-1} holds.
\end{proof}

\subsection{The estimate of $\theta^n_{\rho}$}
\begin{lem}\label{lem32}
For $\theta^n_{\rho}$, we have the estimate
\begin{equation}\label{eq33-1}
\begin{split}
\|\theta^n_{\rho}\|^2_{L^2}+\tr t\sum^n_{i=1}\|\nabla\theta^i_{\rho}\|^2_{L^2}
\leq &C\{\tr t\sum^{n}_{i=1}[\|\theta^{i-1}_{\rho}\|^2_{L^2}+\|\theta^{i-1}_{p}\|^2_{L^2}]+h^{2r+2}+\tr t^2\}.
\end{split}
\end{equation}
\end{lem}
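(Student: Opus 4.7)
The plan is to mirror the proof of Lemma \ref{lem31}: test the error residual \eqref{eq18} with $v_h=\theta^n_\rho$, decompose the right-hand side into its five contributions, absorb the diffusion on the left, then multiply by $2\tr t$ and sum in time. The diffusion piece on the left produces $\|\nabla\theta^n_\rho\|^2_{L^2}$ directly, while the characteristic piece $\tr t^{-1}(\theta^n_\rho-\tilde\theta^{n-1}_\rho\delta^n_h,\theta^n_\rho)$ is handled via the change of variables $y=X^n_h(x)$: this yields $\|\tilde\theta^{n-1}_\rho\sqrt{\delta^n_h}\|_{L^2}=\|\theta^{n-1}_\rho\|_{L^2}$ and, combined with the pointwise bound $|\delta^n_h|\le 1+C\tr t$ (consequence of \eqref{eq23} plus an inverse inequality applied to $\mathbf p^{n-1}_h$) and Cauchy--Schwarz, gives
\[
(\tilde\theta^{n-1}_\rho\delta^n_h,\theta^n_\rho)\le \tfrac12\|\theta^n_\rho\|^2_{L^2}+\tfrac{1+C\tr t}{2}\|\theta^{n-1}_\rho\|^2_{L^2}.
\]

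For the five right-hand side terms I would proceed in order. \textbf{(i)} The projection-rate term $\tr t^{-1}(\zeta^n_\rho-\tilde\zeta^{n-1}_\rho\delta^n_h,\theta^n_\rho)$ splits as $\tr t^{-1}(\zeta^n_\rho-\tilde\zeta^{n-1}_\rho,\theta^n_\rho)+\tr t^{-1}(\tilde\zeta^{n-1}_\rho(1-\delta^n_h),\theta^n_\rho)$; writing the first numerator as $\int_{t_{n-1}}^{t_n}\partial_t\zeta_\rho\,dt$ along characteristics and using $|1-\delta^n_h|\le C\tr t$ bounds it by $C\tr t^{-1}\int_{t_{n-1}}^{t_n}\|\partial_t\zeta_\rho\|^2_{L^2}\,dt+C\|\zeta^{n-1}_\rho\|^2_{L^2}+C\|\theta^n_\rho\|^2_{L^2}$. \textbf{(ii)} For $\tr t^{-1}(\tilde\rho^{n-1}_h(\delta^n_h-\delta^n),\theta^n_\rho)$, the expansion $\delta^n_h-\delta^n=-\tr t\,\nabla\cdot(\mathbf p^{n-1}_h-\mathbf p^{n-1})+O(\tr t^2)$ reduces the term, after an integration by parts that shifts the divergence onto the bounded factor $\tilde\rho^{n-1}_h\theta^n_\rho$, to $C(\|\theta^{n-1}_p\|^2_{L^2}+h^{2r+2}+\tr t^2)+\tfrac14\|\nabla\theta^n_\rho\|^2_{L^2}+C\|\theta^n_\rho\|^2_{L^2}$, invoking the $W^{1,\infty}$ regularity of $\rho^{n-1}$ and $\mathbf p^{n-1}$ together with \eqref{eq23}. \textbf{(iii)} The reaction term is purely algebraic: write $g\rho^n_h(1-\rho^{n-1}_h)-g\rho^n(1-\rho^{n-1})=g(\rho^n_h-\rho^n)(1-\rho^{n-1}_h)+g\rho^n(\rho^{n-1}-\rho^{n-1}_h)$ to obtain $C\{\|\theta^n_\rho\|^2_{L^2}+\|\theta^{n-1}_\rho\|^2_{L^2}+h^{2r+2}\}$.

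\textbf{(iv)} The tracking mismatch $\tr t^{-1}((\tilde\rho^{n-1}_h-\tilde\rho^{n-1})\delta^n,\theta^n_\rho)$ I would split as
\[
\tilde\rho^{n-1}_h(x)-\tilde\rho^{n-1}(x)=\widetilde{(\rho^{n-1}_h-\rho^{n-1})}(x)+\bigl[\rho^{n-1}(x-\tr t\,\mathbf p^{n-1}_h)-\rho^{n-1}(x-\tr t\,\mathbf p^{n-1})\bigr],
\]
treating the first piece by change of variables and the second by the mean value theorem with $\|\nabla\rho^{n-1}\|_{L^\infty}\le C$, producing $C(\|\theta^{n-1}_\rho\|^2_{L^2}+\|\theta^{n-1}_p\|^2_{L^2}+h^{2r+2})+C\|\theta^n_\rho\|^2_{L^2}$. \textbf{(v)} The time-truncation term is the standard $O(\tr t)$ one: Taylor expansion along characteristics yields $C\tr t\int_{t_{n-1}}^{t_n}(\|\rho_{tt}\|^2_{L^2}+\|\mathbf p_t\|^2_{L^2})\,dt$. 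Collecting the bounds, absorbing $\tfrac34\|\nabla\theta^n_\rho\|^2_{L^2}$ on the left, multiplying by $2\tr t$, summing the inequality from $1$ to $n$, using $\theta^0_\rho=0$ and the approximation properties \eqref{eq14}--\eqref{eq14-1}, produces \eqref{eq33-1}. The $\tr t\|\theta^n_\rho\|^2_{L^2}$ contributions accumulating on the right are moved to the left for $\tr t$ sufficiently small, and the remaining $\tr t\sum_{i=1}^{n-1}\|\theta^i_\rho\|^2_{L^2}$ is reindexed to $\tr t\sum_{i=1}^{n}\|\theta^{i-1}_\rho\|^2_{L^2}$ as stated.

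The principal obstacle is step \textbf{(ii)}: extracting an $O(\tr t)$ bound for $\delta^n_h-\delta^n$ that closes in the $L^2$ norm of $\theta^{n-1}_p$ alone. The pointwise expansion naively involves $\nabla(\mathbf p^{n-1}_h-\mathbf p^{n-1})$, yet $\|\nabla\theta^{n-1}_p\|_{L^2}$ is not available at the order required by \eqref{eq33-1}. The resolution, following \cite{rui2010,Jiang2019}, is to integrate by parts so that the divergence falls on the bounded factor $\tilde\rho^{n-1}_h\theta^n_\rho$; this sacrifices a harmless fraction of $\|\nabla\theta^n_\rho\|^2_{L^2}$ to the right-hand side but allows the estimate to close at the stated regularity.
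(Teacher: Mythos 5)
Your overall strategy coincides with the paper's proof: the same test function, the same splitting of the right-hand side (your items (i)--(v) are the paper's $T_4$, $T_3$, $T_5$--$T_7$, $T_2$, $T_1$), the same treatment of the left-hand characteristic term via the change of variables $y=X^n_h(x)$ and $\|\tilde\theta^{n-1}_{\rho}(\delta^n_h)^{1/2}\|_{L^2}=\|\theta^{n-1}_{\rho}\|_{L^2}$, the same expansion $\delta^n_h-\delta^n=-\tr t\,\nabla\cdot(\mathbf p^{n-1}_h-\mathbf p^{n-1})+O(\tr t^2)$ with the divergence shifted onto $\theta^n_{\rho}$ at the cost of a fraction of $\|\nabla\theta^n_{\rho}\|^2_{L^2}$, and the same kickback/summation at the end.

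There is, however, one genuine gap, in your step (i). You bound $\tr t^{-1}(\zeta^n_{\rho}-\tilde\zeta^{n-1}_{\rho},\theta^n_{\rho})$ by "writing the numerator as $\int_{t_{n-1}}^{t_n}\partial_t\zeta_{\rho}\,dt$ along characteristics," but the difference taken along the (approximate) characteristic is the integral of the \emph{material} derivative, $\partial_t\zeta_{\rho}+\mathbf p^{n-1}_h\cdot\nabla\zeta_{\rho}$; equivalently, after splitting off $\zeta^n_{\rho}-\zeta^{n-1}_{\rho}$, the residual translation $\zeta^{n-1}_{\rho}(x)-\zeta^{n-1}_{\rho}(X^n_h)$ is controlled pointwise only through $\nabla\zeta^{n-1}_{\rho}$. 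Since $\|\nabla\zeta^{n-1}_{\rho}\|_{L^2}=O(h^{r})$, this route loses a full power of $h$ and cannot yield the stated $h^{2r+2}$. The paper closes this term differently: by the change of variables one rewrites
\begin{equation*}
\Bigl(\frac{\zeta^{n-1}_{\rho}-\tilde\zeta^{n-1}_{\rho}\delta^n_h}{\tr t},\theta^n_{\rho}\Bigr)=\Bigl(\tilde\zeta^{n-1}_{\rho}\delta^n_h,\frac{\tilde\theta^n_{\rho}-\theta^n_{\rho}}{\tr t}\Bigr),
\end{equation*}
and then uses the bound $\|\tr t^{-1}(\tilde\theta^n_{\rho}-\theta^n_{\rho})(\delta^n_h)^{1/2}\|_{L^2}\le C\|\nabla\theta^n_{\rho}\|_{L^2}$, so that only $\|\zeta^{n-1}_{\rho}\|_{L^2}=O(h^{r+1})$ appears and the gradient factor is absorbed into the diffusion on the left. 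You deploy exactly this kind of transposition for the tracking term (iv) and for the left-hand side, so the fix is within reach, but as written your justification of (i) does not deliver the rate claimed in \eqref{eq33-1}. (A minor cosmetic point: in (iii) you compare against $g\rho^n(1-\rho^{n-1})$ rather than the $g\rho^n(1-\rho^n)$ appearing in \eqref{eq3}, which silently drops the lag term $g\rho^n\int_{t_{n-1}}^{t_n}\rho_t\,dt$; it is harmless since it contributes at the $\tr t^2$ order already present.)
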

\begin{proof}
Taking $v_h=\theta^n_{\rho}$ in \eqref{eq18},  we can get
then we can get
\begin{equation}\label{eq26}
\begin{split}
&(\f{\theta^n_{\rho}-\tilde{\theta}^{n-1}_{\rho}\delta^n_h}{\tr t},\theta^n_{\rho})+(\nabla\theta^n_{\rho},\nabla \theta^n_{\rho})
\\=&(\f{\p \rho^n}{\p t}-\f{\rho^n-\tilde{\rho}^{n-1}\delta^n}{\tr t},\theta^n_{\rho})+(\f{(\tilde{\rho}^{n-1}_h-\tilde{\rho}^{n-1})\delta^n}{\tr t},\theta^n_{\rho})
+(\f{\tilde{\rho}^{n-1}_h(\delta^n_h-\delta^n)}{\tr t},\theta^n_{\rho})\\&+(\f{\zeta^n_{\rho}-\tilde{\zeta}_{\rho}^{n-1}\delta^n_h}{\tr t},\theta^n_{\rho})
+(g(1+\rho^{n-1}_{h})(\theta^{n}_{\rho}-\zeta^{n}_{\rho}),\theta^{n}_{\rho})\\&+(g\rho^{n}(\theta^{n-1}_{\rho}-\zeta^{n-1}_{\rho}),\theta^{n}_{\rho})-(g\rho^{n}\int^{t_{n}}_{t_{n-1}}\rho_tdt,\theta^{n}_{\rho})
=T_1+T_2+T_3+\cdots+T_7.
\end{split}
\end{equation}
Using  the same technique as in \cite{rui2010}, we can get the inequality
\[
|T_1+T_2|\leq C\{\tr t^2\|\rho\|^2_{C^2(L^2)\cap C^1(H^1)\cap C^2(H^2)}+\|\mathbf{p}^{n-1}_h-\mathbf{p}^{n-1}\|^2_{L^2}\}+\f{\varepsilon}{4} \|\theta^n_{\rho}\|^2_{L^2}.
\]
Utilizing the definition of $\delta^n$ and $\delta^n_h$, we know that
\[
\delta^n=1-\tr t\nabla\cdot\mathbf{p}^{n-1}+O(\tr t^2),\quad
\delta^n_h=1-\tr t\nabla\cdot\mathbf{p}^{n-1}_h+O(\tr t^2).
\]
So for $T_3$ we have
\[
|T_3|\leq C\|\mathbf{p}^{n-1}_h-\mathbf{p}^{n-1}\|^2_{L^2}+\f{\varepsilon}{4}  \|\nabla\theta^n_{\rho}\|^2_{L^2}.
\]

For $T_4$, we have
\begin{equation}\label{eq29}
\begin{split}
T_4=&(\f{\zeta^n_{\rho}-\zeta^{n-1}_{\rho}}{\tr t},\theta^n_{\rho})+(\f{\zeta^{n-1}_{\rho}-\tilde{\zeta}^{n-1}_{\rho}\delta^n_h}{\tr t},\theta^n_{\rho})\\
=&(\f{\zeta^n_{\rho}-\zeta^{n-1}_{\rho}}{\tr t},\theta^n_{\rho})+(\f{\tilde{\zeta}^{n-1}_{\rho}\delta^n_h}{\tr t},\tilde{\theta}^n)-(\f{\tilde{\zeta}^{n-1}_{\rho}\delta^n_h}{\tr t},\theta^n_{\rho})\\
=&(\f{\zeta^n_{\rho}-\zeta^{n-1}_{\rho}}{\tr t},\theta^n_{\rho})+(\tilde{\zeta}^{n-1}_{\rho}\delta^n_h,\f{\tilde{\theta}^n_{\rho}-\theta^n_{\rho}}{\tr t}).
\end{split}
\end{equation}
To estimate the bound of $T_{4}$, we consider the following transformation, 
\[
y=f_{\bar{z}}=x-\mathbf{p}^{n-1}_h\tr t\bar{z}.
\]
It is easily seen that
\[
\begin{split}
\|\f{\tilde{\theta}^n_{\rho}-\theta^n_{\rho}}{\tr t}(\delta^n_h)^{1/2}\|^2_{L^2}=&
\tr t^{-2}\int_\Omega(\theta^n_{\rho}-\tilde{\theta}^n_{\rho})^2\delta^n_hdx=
\tr t^{-2}\int_\Omega(\int^{x}_{X^n_h}\f{\p\theta^n_{\rho}}{\p z}dz)^2\delta^n_hdx\\
= &\tr t^{-2}\int_\Omega(\int^{1}_{0}\f{\p\theta^n_{\rho}}{\p z}(x-\mathbf{p}^{n-1}_h\tr t\bar{z})\cdot(\mathbf{p}^{n-1}_h\tr t\bar{z})d\bar{z})^2\delta^n_hdx\\
\leq &C\int^{1}_{0}\int_\Omega|\nabla\theta^n_{\rho}(x-\mathbf{p}^{n-1}_h\tr t\bar{z})|^2\delta^n_hdxd\bar{z}
\leq C\|\nabla\theta^n_{\rho}\|^2_{L^2}.
\end{split}
\]
Using the above inequality, we can get
\[
\begin{split}
|T_4|\leq &\|\f{\zeta^n_{\rho}-\zeta^{n-1}_{\rho}}{\tr t}\|_{L^2}\|\theta^n_{\rho}\|_{L^2}+\|\tilde{\zeta}^{n-1}_{\rho}(\delta^n_h)^{1/2}\|_{L^2}\|\f{\tilde{\theta}^n_{\rho}-\theta^n_{\rho}}{\tr t}(\delta^n_h)^{1/2}\|_{L^2}
\\
\leq& C\{\f{1}{\tr t}\int^{t_n}_{t_{n-1}}\|\f{\zeta_{\rho}}{\p t}\|^2_{L^2}dt+\|\zeta^{n-1}_{\rho}\|^2_{L^2}\}+\f{\varepsilon}{4}\|\nabla \theta^n_{\rho}\|^2_{L^2}.
\end{split}
\]

For $T_{5}$, $T_{6}$ and $T_{7}$, using Schwarz inequality, we can get
\[
\begin{split}
T_{5}+T_{6}+T_{7}\leq C\{\|\theta^{n}_{\rho}\|^2_{L^2}+\|\theta^{n-1}_{\rho}\|^2_{L^2}+\|\zeta^{n}_{\rho}\|^2_{L^2}+\|\zeta^{n-1}_{\rho}\|^2_{L^2}+\tr t\int^{t_{n}}_{t_{n-1}}\|\rho_{t}\|^{2}_{L^{2}}dt\}
\end{split}
\]

Substituting the above estimates into \eqref{eq26}, we get
\[
\begin{split}
&(\f{\theta^n_{\rho}-\tilde{\theta}^{n-1}_{\rho}\delta^n_h}{\tr t},\theta^n_{\rho})+(\nabla\theta^n_{\rho},\nabla \theta^n_{\rho})
\\\leq &C\{\|\theta^{n}_{\rho}\|^2_{L^2}+\|\theta^{n-1}_{\rho}\|^2_{L^2}+\|\theta^{n-1}_{p}\|^2_{L^2}+\|\zeta^{n-1}_{p}\|^2_{L^2}+\|\zeta^{n}_{\rho}\|^2_{L^2}+\|\zeta^{n-1}_{\rho}\|^2_{L^2}
\\
&+\tr t\int^{t_n}_{t_{n-1}}\|\rho_t\|^2_{L^2}dt
+\f{1}{\tr t}\int^{t_n}_{t_{n-1}}\|\f{\p \zeta_{\rho}}{\p t}\|^2_{L^2}dt
\\
&+\tr t^2\|\rho\|^2_{C^2(L^2)\cap C^1(H^1)\cap C^2(H^2)}\}
+\f{\varepsilon}{2} [\| \theta^n_{\rho}\|^2_{L^2}+\|\nabla \theta^n_{\rho}\|^2_{L^2}].
\end{split}
\]

It is easily seen that
\[
\begin{split}
(\f{\theta^n_{\rho}-\tilde{\theta}^{n-1}_{\rho}\delta^n_h}{\tr t},\theta^n_{\rho})&\geq \f{1}{2\tr t}[\|\theta^n_{\rho}\|^2_{L^2}-\|\tilde{\theta}^{n-1}_{\rho}\delta^n_h\|^2_{L^2}]
\\&\geq \f{1}{2\tr t}[\|\theta^n_{\rho}\|^2_{L^2}-(1+C\tr t)\|\theta^{n-1}_{\rho}\|^2_{L^2}].
\end{split}
\]
Hence,  for sufficiently small $\varepsilon$, we have
\[
\begin{split}
&\|\theta^n_{\rho}\|^2_{L^2}-\|\theta^{n-1}_{\rho}\|^2_{L^2}+\tr t\|\nabla\theta^n_{\rho}\|^2_{L^2}
\\\leq &C\tr t\{\|\theta^{n}_{\rho}\|^2_{L^2}+\|\theta^{n-1}_{\rho}\|^2_{L^2}+\|\theta^{n-1}_{p}\|^2_{L^2}+\|\zeta^{n-1}_{p}\|^2_{L^2}+\|\zeta^{n}_{\rho}\|^2_{L^2}+\|\zeta^{n-1}_{\rho}\|^2_{L^2}
+\tr t\int^{t_n}_{t_{n-1}}\|\rho_t\|^2_{L^2}dt
\\&+\f{1}{\tr t}\int^{t_n}_{t_{n-1}}\|\f{\p \zeta_{\rho}}{\p t}\|^2_{L^2}dt
+\tr t^2\|\rho\|^2_{C^2(L^2)\cap C^1(H^1)\cap C^2(H^2)}\}.
\end{split}
\]
Summing the estimate from $1$ to $n$, we get
\begin{equation}\label{eq33}
\begin{split}
&\|\theta^n_{\rho}\|^2_{L^2}+\tr t\sum^n_{i=1}\|\nabla\theta^i_{\rho}\|^2_{L^2}
\\\leq &C\{\tr t\sum^{n}_{i=1}[\|\theta^{i}_{\rho}\|^2_{L^2}+\|\zeta^{i}_{\rho}\|^2_{L^2}+\|\theta^{i-1}_{p}\|^2_{L^2}+\|\zeta^{i-1}_{p}\|^2_{L^2}]+\|\f{\p\zeta_{\rho}}{\p t}\|^2_{L^2(0,t_n;L^2)}\\
&+\tr t^2\|\rho_t\|^2_{L^2(0,t_n;L^2)}+\tr t^2\|\rho\|^2_{C^2(L^2)\cap C^1(H^1)\cap C^2(H^2)}\}.
\end{split}
\end{equation}
Using the esimates \eqref{eq14} and \eqref{eq14-1}, we get the inequality \eqref{eq33-1}.
\end{proof}

\subsection{The estimate of $\theta^n_{c}$}
\begin{lem}\label{lem33} For $\theta^n_{c}$, we have the estimate
\begin{equation}\label{eq36-1}
\begin{split}
\|\theta_c^{n}\|^{2}_{L^{2}}+\tr t\sum^{n}_{i=1}\|\nabla \theta^{i}_{c}\|^{2}_{L^{2}}
\leq C\{\tr t\sum^{n}_{i=1}[
\|\theta^{i-1}_{p}\|^2_{L^2}+\|\theta_\rho^{i}\|^{2}_{L^{2}}]+h^{2r+2}+\tr t^{2}
\}.
\end{split}
\end{equation}
\end{lem}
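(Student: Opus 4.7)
The plan is to test the error residual equation \eqref{eq21} with $z_h = \theta^n_c$. On the left, the coercive terms come out as the usual discrete time-derivative identity
\[
\bigl(\tfrac{\theta^n_c - \theta^{n-1}_c}{\tr t}, \theta^n_c\bigr) \;\geq\; \tfrac{1}{2\tr t}\bigl(\|\theta^n_c\|^2_{L^2} - \|\theta^{n-1}_c\|^2_{L^2}\bigr),
\]
together with $(\mathcal{D}_c\nabla\theta^n_c,\nabla\theta^n_c) + (\theta^n_c,\theta^n_c)$, which provide the $H^1$-control of $\theta^n_c$ that we can reserve to absorb the hard term below. The right-hand side is then split into five groups that mirror Lemma \ref{lem31}.

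For the three standard terms involving the projection, the time truncation, and the zeroth-order piece, I would reuse the same devices as in Lemma \ref{lem31}: write $\zeta^n_c-\zeta^{n-1}_c = \int_{t_{n-1}}^{t_n}\partial_t\zeta_c\,dt$ and $\partial_t c^n - (c^n-c^{n-1})/\tr t = -\tfrac{1}{\tr t}\int_{t_{n-1}}^{t_n}(t-t_{n-1})c_{tt}\,dt$, then invoke \eqref{eq14-1} and the $L^2(0,t_n;L^2)$-regularity of $c_{tt}$ to gain, after summation, the $h^{2r+2}$ and $\tr t^2$ contributions. The $(\rho^n_h-\rho^n,\theta^n_c)$ term decomposes as $(\theta^n_\rho - \zeta^n_\rho,\theta^n_c)$ and yields $\|\theta^n_\rho\|^2_{L^2} + \|\theta^n_c\|^2_{L^2} + h^{2r+2}$ by Young's inequality and \eqref{eq14}.

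The main obstacle is the anisotropic coupling term $k\bigl(\nabla\cdot(\rho^n_h\mathbf{p}^{n-1}_h-\rho^n\mathbf{p}^n),\theta^n_c\bigr)$, since a direct $L^2$ estimate of the divergence is not available and would require one extra derivative on the discrete solutions. The trick is to integrate by parts using the periodic boundary condition to transfer the divergence onto $\theta^n_c$,
\[
k\bigl(\nabla\cdot(\rho^n_h\mathbf{p}^{n-1}_h-\rho^n\mathbf{p}^n),\theta^n_c\bigr) = -k\bigl(\rho^n_h\mathbf{p}^{n-1}_h-\rho^n\mathbf{p}^n,\nabla\theta^n_c\bigr),
\]
and then split the product as
\[
\rho^n_h\mathbf{p}^{n-1}_h-\rho^n\mathbf{p}^n = (\rho^n_h-\rho^n)\mathbf{p}^{n-1}_h + \rho^n(\mathbf{p}^{n-1}_h-\mathbf{p}^{n-1}) + \rho^n\int_{t_{n-1}}^{t_n}\mathbf{p}_t\,dt .
\]
Using $\|\mathbf{p}^{n-1}_h\|_{L^\infty}\leq C$ from the inductive hypothesis \eqref{eq23} and $\rho\in l^\infty(W^{1,\infty})$, each factor in $L^2$ is controlled by $\|\theta^n_\rho\|_{L^2}+\|\zeta^n_\rho\|_{L^2}$, $\|\theta^{n-1}_p\|_{L^2}+\|\zeta^{n-1}_p\|_{L^2}$, or $\tr t^{1/2}\|\mathbf{p}_t\|_{L^2(t_{n-1},t_n;L^2)}$ respectively. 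Young's inequality then produces these quantities squared together with $\tfrac{\mathcal{D}_c}{2}\|\nabla\theta^n_c\|^2_{L^2}$, which is absorbed into the coercive diffusion term on the left.

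Collecting everything, multiplying by $2\tr t$, summing from $i=1$ to $n$, applying the approximation bounds \eqref{eq14}--\eqref{eq14-1} to eliminate the $\zeta$-quantities in favour of $h^{2r+2}$, and using the regularity of $c_t,c_{tt},\mathbf{p}_t$ to bound the time-integral residuals by $\tr t^2$, yields the stated inequality once a discrete Gronwall argument (with $\tr t$ sufficiently small so that the $\|\theta^i_c\|^2$ term can be moved to the left) absorbs the $\tr t\sum_i\|\theta^i_c\|^2$ contribution, leaving the desired bound \eqref{eq36-1}.
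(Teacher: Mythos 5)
Your proposal is correct and follows essentially the same route as the paper: testing \eqref{eq21} with $\theta^n_c$, the standard treatment of the projection and time-truncation terms, and in particular the same integration by parts and three-way splitting $(\rho^n_h-\rho^n)\mathbf{p}^{n-1}_h+\rho^n(\mathbf{p}^{n-1}_h-\mathbf{p}^{n-1})+\rho^n(\mathbf{p}^{n-1}-\mathbf{p}^n)$ for the divergence coupling term, with the inductive hypothesis \eqref{eq23} supplying the $L^\infty$ bound. The only cosmetic difference is that the paper absorbs the leftover $\|\theta^n_c\|^2$ into the zeroth-order coercive term $(\theta^n_c,\theta^n_c)$ on the left rather than via a final kick-back in the summed inequality.
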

\begin{proof}
Choosing $z_{h}=\theta^{n}_{c}$ in \eqref{eq21}, we can get
\begin{equation}\label{eq34}
\begin{split}
&\f{1}{2\tr t}[\|\theta_c^{n}\|^{2}_{L^{2}}-\|\theta_c^{n-1}\|^{2}_{L^{2}}]+(\mathcal{D}_{c}\nabla\theta_{c}^n,\nabla \theta_{c}^n)+\|\theta_{c}^{n}|^{2}_{L^{2}}
\\
\leq&(\f{\zeta^{n}_{c}-\zeta^{n-1}_{c}}{\tr t},\theta_{c}^{n})+(\f{\p c^n}{\p t}-\f{c^n-c^{n-1}}{\tr t},\theta_{c}^{n})+(\zeta_{c}^{n},\theta_{c}^{n})
\\&
+(\rho^{n}_{h}-\rho^{n},\theta_{c}^{n})+(k\nabla\cdot(\rho^n_{h}\mathbf{p}^{n-1}_{h}-\rho^n\mathbf{p}^n),\theta_{c}^{n})
\\=&I_{1}+I_{2}+\cdots+I_{5}.
\end{split}
\end{equation}

For $I_{1}$, $I_{2}$ , $I_{3}$ and $I_{4}$ , we have the estimate
\[
\begin{split}
I_{1}+I_{2}+I_{3}+I_{4}
\leq C\{\int^{t_{n}}_{t_{n-1}}\|\f{\p \zeta_c}{\p t}\|^{2}_{L^{2}}dt+\tr t\int^{t_{n}}_{t_{n-1}}\|c_{tt}\|^{2}_{L^{2}}dt+\|\zeta_c^{n}\|^{2}_{L^{2}}+\|\theta_\rho^{n}\|^{2}_{L^{2}}+\|\zeta_\rho^{n}\|^{2}_{L^{2}}]\}+\f{1}{2}\|\theta_c^{n}\|^{2}_{L^{2}}.
\end{split}
\]
Note that
\[
\begin{split}
I_{5}=&-(k[(\rho^n_{h}-\rho^{n})\mathbf{p}^{n-1}_{h}],\nabla\theta_c^{n})-(k[\rho^n(\mathbf{p}^{n-1}_{h}-\mathbf{p}^{n-1})],\nabla\theta_c^{n})
-(k[\rho^n(\mathbf{p}^{n-1}-\mathbf{p}^n)],\nabla\theta_c^{n})
\\\leq &C\{\|\theta_\rho^{n}\|^{2}_{L^{2}}+\|\theta_p^{n-1}\|^{2}_{L^{2}}+\|\zeta_\rho^{n}\|^{2}_{L^{2}}+\|\zeta_p^{n-1}\|^{2}_{L^{2}}+\tr t\int^{t_{n}}_{t_{n-1}}\|\mathbf{p}_{t}\|^{2}_{L^{2}}dt\}+\f{1}{2}\tr t\|\mathcal{D}^{1/2}_{c}\nabla\theta^{n}_{c}\|^{2}_{L^{2}}.
\end{split}
\]

Substituting the above estimates into \eqref{eq34} and multiplying it by $2\tr t$, we get
\[
\begin{split}
&\|\theta_c^{n}\|^{2}_{L^{2}}-\|\theta_c^{n-1}\|^{2}_{L^{2}}+\tr t\|\mathcal{D}^{1/2}_{c}\nabla \theta^{n}_{c}\|^{2}_{L^{2}}\\
\leq &C\tr t\{\f{1}{\tr t}\int^{t_{n}}_{t_{n-1}}\|\f{\p \zeta_c}{\p t}\|^{2}_{L^{2}}dt+\|\zeta_c^{n}\|^{2}_{L^{2}}+\|\zeta_\rho^{n}\|^{2}_{L^{2}}+\|\zeta_p^{n-1}\|^{2}_{L^{2}}+\|\theta_\rho^{n}\|^{2}_{L^{2}}\\
&+\|\theta_p^{n-1}\|^{2}_{L^{2}}+\tr t\int^{t_{n}}_{t_{n-1}}[\|\mathbf{p}_{t}\|^{2}_{L^{2}}+\|c_{tt}\|^{2}_{L^{2}}]dt
\}.
\end{split}
\]
Hence,  utilizing  \eqref{eq14} and summing the above estimate from $1$  to $n$, we can obtain the inequality \eqref{eq36-1}.
\end{proof}

\subsection{The proof of Theorem \ref{thm1}}

\begin{proof}
Utlizing Lemmas \ref{lem31}-\ref{lem33}, we  have the estimate
\begin{equation}\label{eq41}
\begin{split}
&\|\theta_\rho^{n}\|^{2}_{L^{2}}+\|\theta_p^{n}\|^{2}_{L^{2}}+\|\theta_c^{n}\|^{2}_{L^{2}}+\tr t\sum^{n}_{i=1}[\|\nabla\theta^{i}_{\rho}\|^{2}_{L^{2}}+\|\nabla\theta^{i}_{p}\|^{2}_{L^{2}}+\|\nabla \theta^{i}_{c}\|^{2}_{L^{2}}]\\
\leq &C\{\tr t\sum^{n}_{i=1}[\|\theta^{i-1}_{p}\|^2_{L^2}+\|\theta^{i-1}_{c}\|^2_{L^2}+\|\theta^{i-1}_{\rho}\|^2_{L^2}]+h^{2r+2}+\tr t^{2}
\}.
\end{split}
\end{equation}
The discrete Gronwall's lemma results in the estimate
\begin{equation}\label{eq42}
\|\theta_\rho^{n}\|^{2}_{L^{2}}+\|\theta_p^{n}\|^{2}_{L^{2}}+\|\theta_\sigma^{n}\|^{2}_{L^{2}}+\tr t\sum^{n}_{i=1}[\|\nabla\theta^{i}_{\rho}\|^{2}_{L^{2}}+\|\nabla\theta^{i}_{p}\|^{2}_{L^{2}}+\|\nabla\cdot\theta^{i}_{\sigma}\|^{2}_{L^{2}}]
\leq C\{h^{2r+2}+\tr t^{2}\}
\end{equation}
Utilzing the approximate  properties \eqref{eq14}, we the convergence result of Theorem \ref{thm1}.

Finally we check the inductive hypothesis \eqref{eq23}.   From \eqref{eq42}, we can get the estiamte
\begin{equation}\label{eq31}
\begin{split}
\|\mathbf{p}^n_h\|_{L^\infty}+\|\rho^n_h\|_{L^\infty}\leq &Ch^{-1}\{\|\theta^n_{p}\|_{L^2}+\|\theta^n_{\rho}\|_{L^2}\}+(\|\zeta^n_{p}\|_{L^\infty}+\|\zeta^n_{\rho}\|_{L^\infty}+\|p^n\|_{L^\infty}+\|\rho^n\|_{L^\infty})
\\ \leq& Ch^{-1}(h^{r+1}+\tr t)+Ch^{r+1}+\|\mathbf{p}^n\|_{L^\infty}+\|\rho^n\|_{L^\infty}\leq C,
\end{split}
\end{equation}
So, the inductive hypothesis \eqref{eq23} holds.
\end{proof}

\section{Numerical examples}

\subsection{Convergence test}
Set $\Omega=[0,1]\times [0,1]$. We show several numerical results for the coupled problem \eqref{eq1} with our proposed method. The coefficients of the coupled system \eqref{eq1} can be chosen as follows: 
\[
D_c=1,\quad D_p=1, \quad s=0.5,\quad k=1,\quad \Gamma=1,\quad \Gamma_2=10,\quad g=0.1.
\]    
The exact solution of the coupled system \eqref{eq1} is taken by 
 \begin{equation}
\begin{split}
&\rho(x,y,t)=\sin(4\pi x)\sin(4\pi y)\exp(\sin(t))\\
&c(x,y,t)=\cos(4\pi x)\cos(4\pi y)\exp(\cos(t)),\\
&p_1(x,y,t)=\sin(4\pi x)\cos(4\pi y)\exp(\sin(t)),\\
&p_2(x,y,t)=\cos(4\pi x)\sin(4\pi y)\exp(\cos(t)).
\end{split}
\end{equation}
The piecewise linear polynomial space is considered. For different mesh size $h$ and $\Delta t=h^2$, some numerical results are presented in Table \ref{tab4}, Table \ref{tab5} and Table \ref{tab6}  for $\rho$, $c$ and $\|{\bf P}\|$, where  $\|{\bf P}\|^{2}=\int_{\Omega}(p_1^2+p_2^2)d{\bf{x}}$. From these tables, we can easily see that the new method is of time first-order accuracy and  spacial second-order accuracy both in $L^2$-norm and  $L^\infty$-norm, which is coincided with  theoretical result.

\begin{table}[H]
\begin{center}
\caption{Convergence results  for $\rho$}\label{tab4}
\begin{tabular}{c|cccc|cccc}
\hline
$h$
&\multicolumn{4}{c|}{Convergence rates in space}  &\multicolumn{4}{c}{Convergence rates in time }       \\ \cline{2-9}
&  $L^\infty$& rate & $L^2$ &rate& $L^\infty$ & rate & $L^2$ &rate
\\
\hline
$1/{8}$&9.61e-02&-&7.68e-02&-&9.61e-02&-&7.68e-02&-\\
$1/{16}$&2.41e-02&2.0&1.01e-02&2.0&2.41e-02&1.0&1.01e-02&1.0\\
$1/{32}$&6.02e-03&2.0&2.25e-03&2.0&6.02e-03&1.0&2.25e-03&1.0\\
$1/{64}$&1.52e-03&2.0&5.61e-04&2.0&1.52e-03&1.0&5.61e-04&1.0\\
\hline
\end{tabular}
\end{center}
\end{table}

\begin{table}[H]
\begin{center}
\caption{Convergence results for $c$}\label{tab5}
\begin{tabular}{c|cccc|cccc}
\hline
$h$
&\multicolumn{4}{c|}{Convergence rates in space}  &\multicolumn{4}{c}{Convergence rates in time}       \\ \cline{2-9}
&  $L^\infty$& rate & $L^2$ &rate& $L^\infty$ & rate & $L^2$ &rate
\\
\hline
$1/{8}$&2.13e-01&-&1.52e-01&-&2.13e-01&-&1.52e-01&-\\
$1/{16}$&5.22e-02&2.0&3.79e-02&2.0&5.22e-02&1.0&3.79e-02&1.0\\
$1/{32}$&1.28e-02&2.0&9.45e-03&2.0&1.28e-02&1.0&9.45e-03&1.0\\
$1/{64}$&3.01e-03&2.0&2.35e-03&2.0&3.01e-03&1.0&2.35e-03&1.0\\
\hline
\end{tabular}
\end{center}
\end{table}

\begin{table}[H]
\begin{center}
\caption{Convergence results for $\|{\bf P}\|$}\label{tab6}
\begin{tabular}{c|cccc|cccc}
\hline
$h$
&\multicolumn{4}{c|}{Convergence rates in space}  &\multicolumn{4}{c}{Convergence rates in time}       \\ \cline{2-9}
&  $L^\infty$& rate & $L^2$ &rate& $L^\infty$ & rate & $L^2$ &rate
\\
\hline
$1/{8}$&1.70e-01&-&1.28e-01&-&1.70e-01&-&1.28e-01&-\\
$1/{16}$&4.20e-02&2.0&3.18e-02&2.0&4.20e-02&1.0&3.18e-02&1.0\\
$1/{32}$&1.03e-02&2.0&7.93e-03&2.0&1.03e-02&1.0&7.93e-03&1.0\\
$1/{64}$&2.55e-03&2.0&1.95e-03&2.0&2.55e-03&1.0&1.95e-03&1.0\\
\hline
\end{tabular}
\end{center}
\end{table}

In order to validate the efficiency of our proposed method  in the following subsections, we will give some numerical results to simulate dynamics of the clustering and pattern formation in the repulsion case as listed in  \cite{Liebchen2015, Mukherjee2018}, meanwhile, we will consider some other value of parameter to try find the new wave pattern formation. Throughout the following section, the parameters are chosen as follows:
\[
D_c=1.0,\Gamma_2=10,\Gamma=1.0,D_p=1.0,k=0.5.
\]
The mesh size and time increment are $0.34$ and $0.01$, respectively. The period boundary condition is still considered.

\subsection{Chemorepulsion case 1}

In this case, the initial profile of bacterial density, self-chemical density and polarization given as $(u_{0},c_{0},{\bf{p}}_0)$. $u_{0}=0.1exp(-200*(x-0.5*Lx)^2-200*(y-0.5*Ly)^2)$,$c_0=u_{0}$, ${{\bf{p}}_0}=0.01(rand(0,1),rand(0,1))$, $Lx=60$, $Ly=60$, $d=0.01$, $h=0.6$. The evolution of the process is shown as  in Fig.\ref{RE_p1u}-\ref{RE_p1p}. These numerical results show that clustering and pattern formation appear with the increase of time, and finally both of them form one  special order. It seems that firstly the regular clustering appears and parallel to the boundary of domain as shown in Fig.\ref{RE_p1c}e and Fig.\ref{RE_p1u}f, secondly, the total clustering gathering into four parts as represented in Fig.\ref{RE_p1c}f and Fig.\ref{RE_p1u}h, thirdly, each part following the counter-diagonal direction move as Fig.\ref{RE_p1c}g, finally all the parts move in the same velocity and small clustering is back to one whole as Fig.\ref{RE_p1c}h. Then the pattern will get into the first step. The process of this cycle never stops. The system arrives at one equilibrium state. The Fig.\ref{RE_p1p} shows the dynamic process of polarization which clearly represent pattern formation and wave appears  association with bacteria and self-secreted chemical. 

\begin{figure}[H]
\centering
\subfigure[ $u$ at time 50 ]{\includegraphics[width=0.22\textwidth]{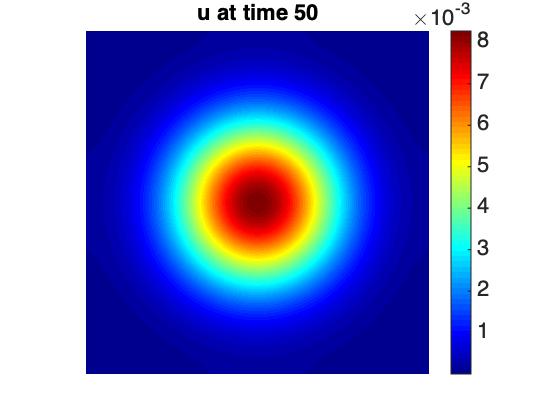}}
\subfigure[ $u$ at time 200 ]{\includegraphics[width=0.22\textwidth]{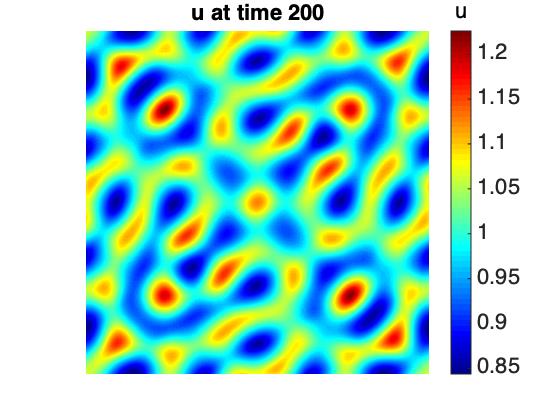}}
\subfigure[ $u$ at time 300 ]{\includegraphics[width=0.22\textwidth]{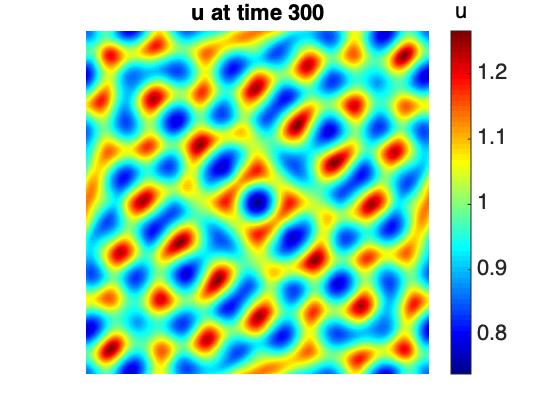}}
\subfigure[ $u$ at time 400 ]{\includegraphics[width=0.22\textwidth]{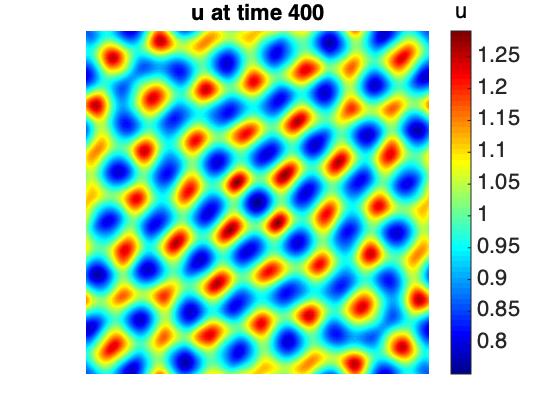}}\\

\subfigure[ $u$ at time 500 ]{\includegraphics[width=0.22\textwidth]{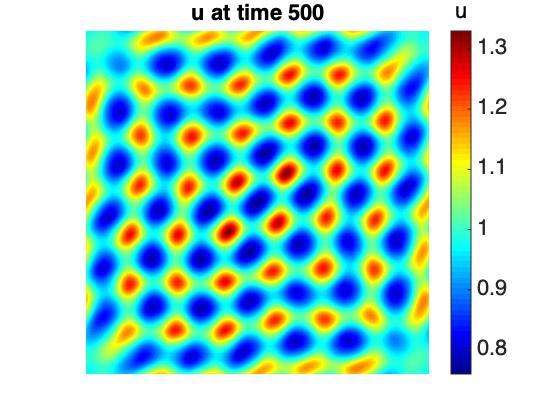}}
\subfigure[ $u$ at time 600 ]{\includegraphics[width=0.22\textwidth]{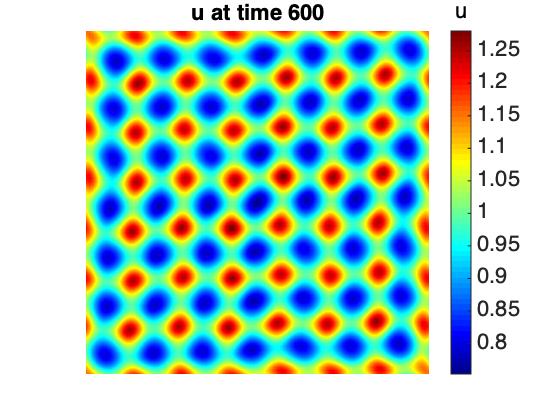}}
\subfigure[ $u$ at time 700 ]{\includegraphics[width=0.22\textwidth]{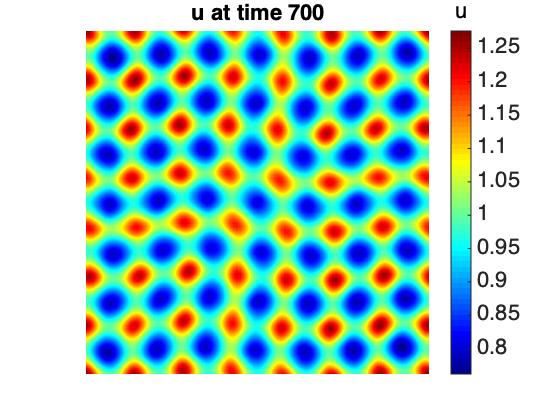}}
\subfigure[ $u$ at time 800 ]{\includegraphics[width=0.22\textwidth]{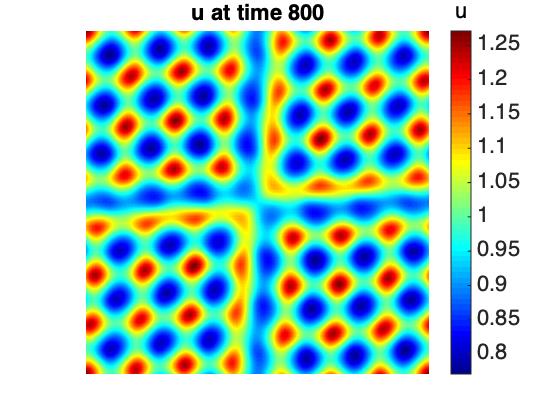}}\\

\caption{The dynamic process for bacterial density  with small initial distribution and $s=-15$, $g=0.1$.}
\label{RE_p1u}
\end{figure}

\begin{figure}[H]
\centering
\subfigure[$c$ at time 50]{\includegraphics[width=0.22\textwidth]{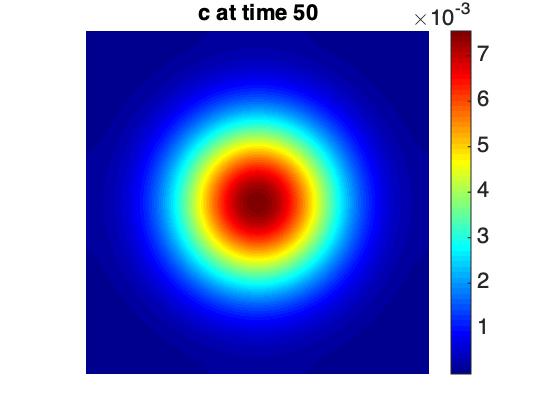}}
\subfigure[$c$ at time 200]{\includegraphics[width=0.22\textwidth]{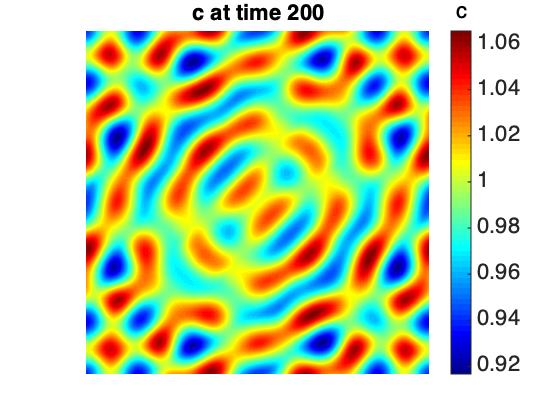}}
\subfigure[$c$ at time 300]{\includegraphics[width=0.22\textwidth]{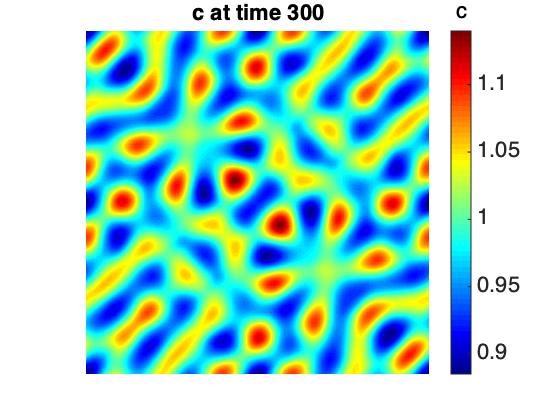}}
\subfigure[$c$ at time 400]{\includegraphics[width=0.22\textwidth]{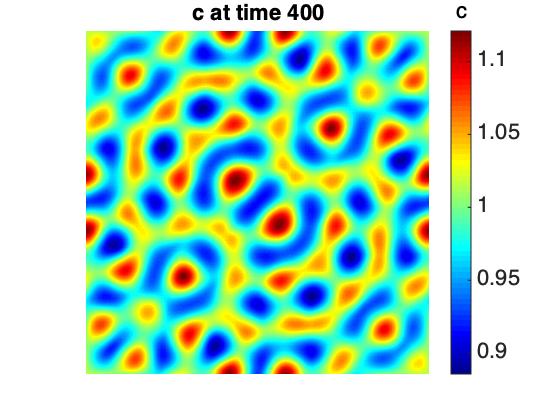}}\\

\subfigure[$c$ at time 500]{\includegraphics[width=0.22\textwidth]{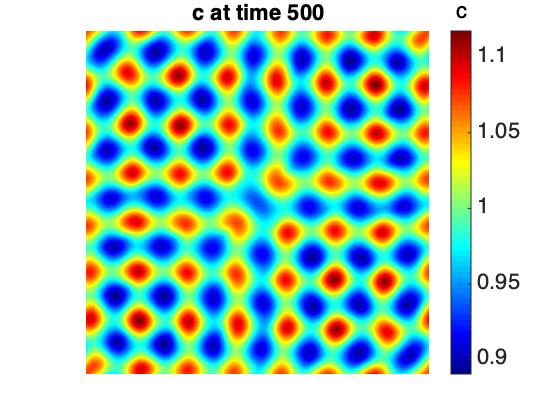}}
\subfigure[$c$ at time 600]{\includegraphics[width=0.22\textwidth]{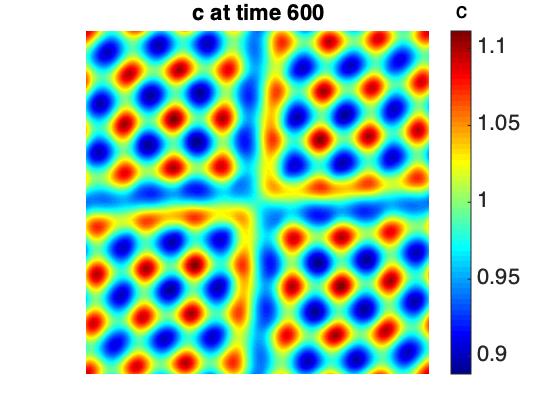}}
\subfigure[$c$ at time 700]{\includegraphics[width=0.22\textwidth]{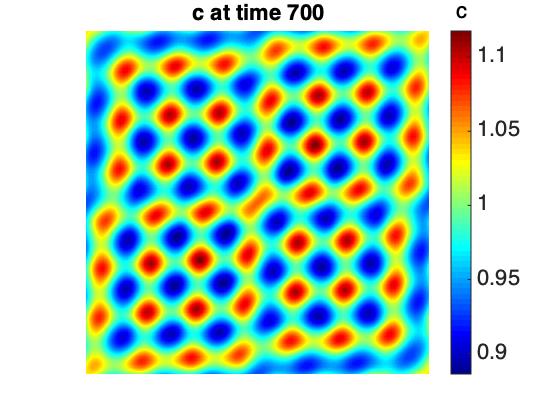}}
\subfigure[$c$ at time 800]{\includegraphics[width=0.22\textwidth]{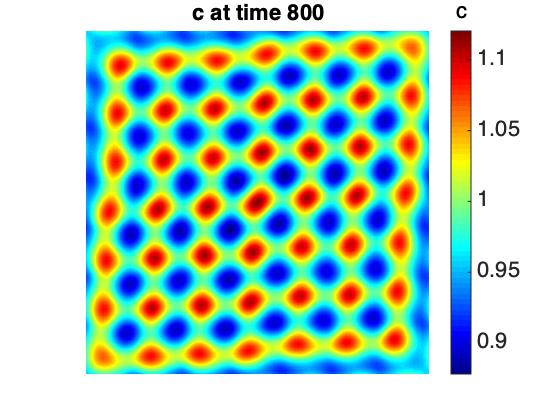}}\\
\caption{The dynamic process for density of self-secreted chemical  with small initial distribution and $s=-15$, $g=0.1$.}
\label{RE_p1c}
\end{figure}

\begin{figure}[H]
\centering
\subfigure[${\bf{p}}$ at time 50 ]{\includegraphics[width=0.22\textwidth]{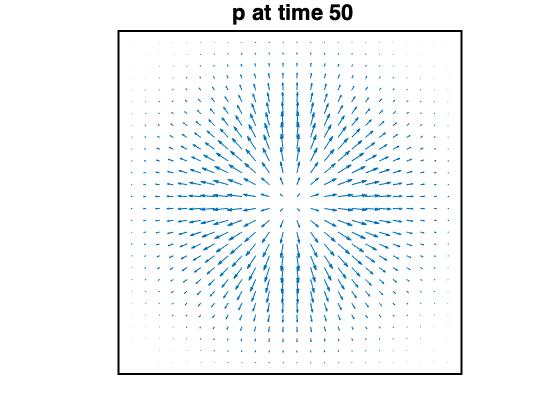}}
\subfigure[${\bf{p}}$ at time 200 ]{\includegraphics[width=0.22\textwidth]{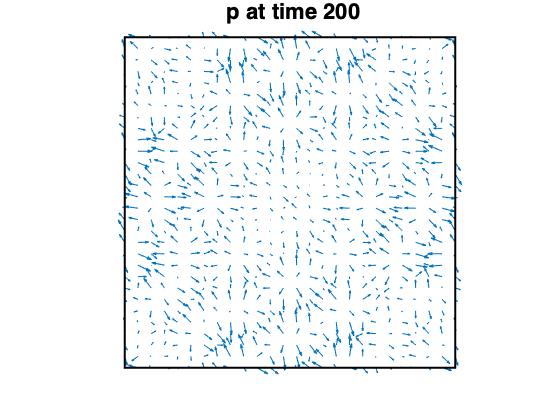}}
\subfigure[${\bf{p}}$ at time 300 ]{\includegraphics[width=0.22\textwidth]{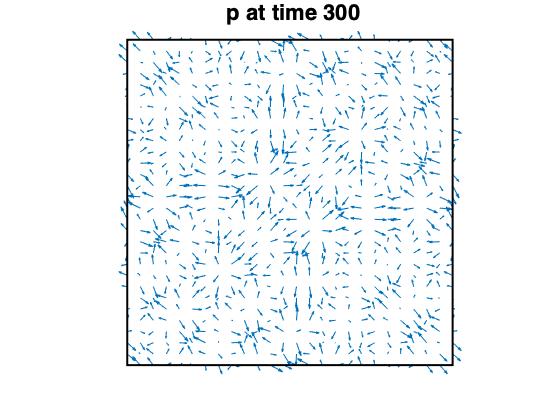}}
\subfigure[${\bf{p}}$ at time 400 ]{\includegraphics[width=0.22\textwidth]{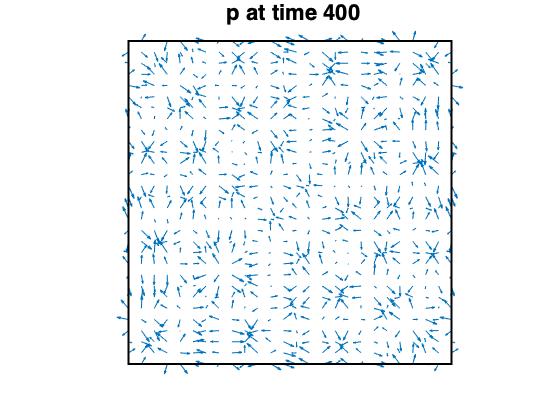}}\\

\subfigure[${\bf{p}}$ at time 500 ]{\includegraphics[width=0.22\textwidth]{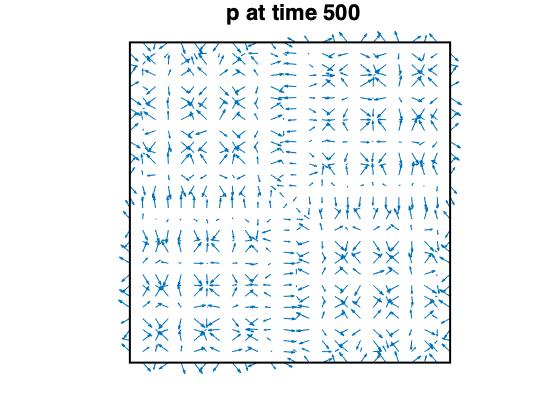}}
\subfigure[${\bf{p}}$ at time 600 ]{\includegraphics[width=0.22\textwidth]{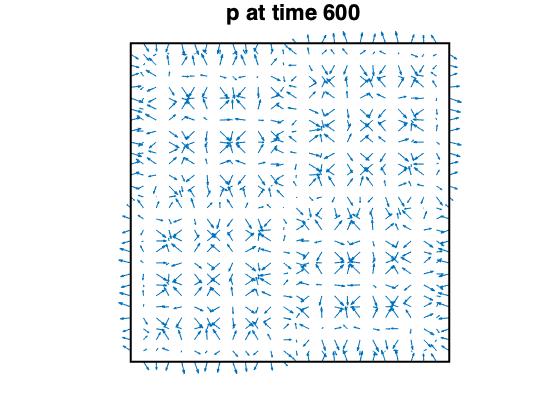}}
\subfigure[${\bf{p}}$ at time 700 ]{\includegraphics[width=0.22\textwidth]{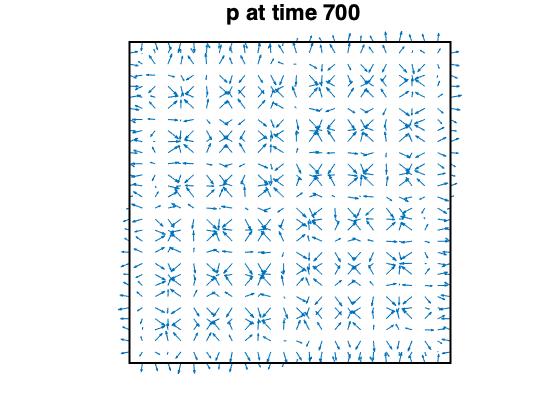}}
\subfigure[${\bf{p}}$ at time 800 ]{\includegraphics[width=0.22\textwidth]{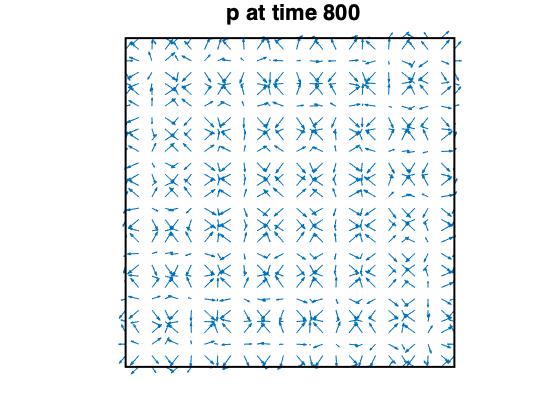}}\\

\caption{The dynamic process for density of polarization with small initial distribution and $s=-15$, $g=0.1$.}
\label{RE_p1p}
\end{figure}

\subsection{Chemorepulsion case 2}

Here the initial condition is chosen with a small perturbation of the uniform state $(1,1,{\bf{0}})$. The value of other parameters is the same as chemorepulsion case 1. The evolution of the process is presented as in Fig.\ref{RE_p1_1u}-\ref{RE_p1_1p}. We can find that clustering, pattern formation and wave appear. Comparing with case 1, the only different is the initial distribution. We could see that if the large number of  bacteria live in the environment with enough self-secreted chemicals. The wave also appear just the intermediate pattern formation different. Firstly clustering appear and then it forms one line which moves along a direction with an inclination to the boundary of domain. And finally the system reaches the equilibrium state.

\begin{figure}[H]
\centering
\subfigure[ $u$ at time 50 ]{\includegraphics[width=0.22\textwidth]{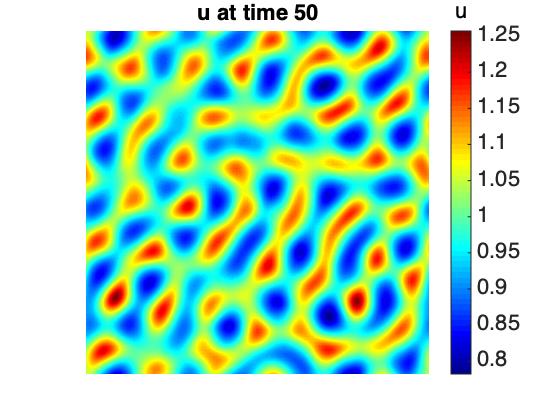}}
\subfigure[ $u$ at time 200 ]{\includegraphics[width=0.22\textwidth]{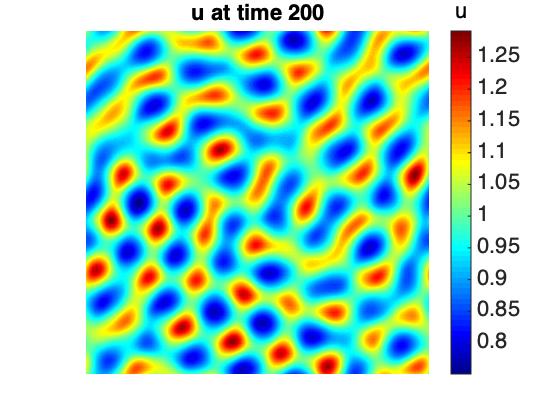}}
\subfigure[ $u$ at time 300 ]{\includegraphics[width=0.22\textwidth]{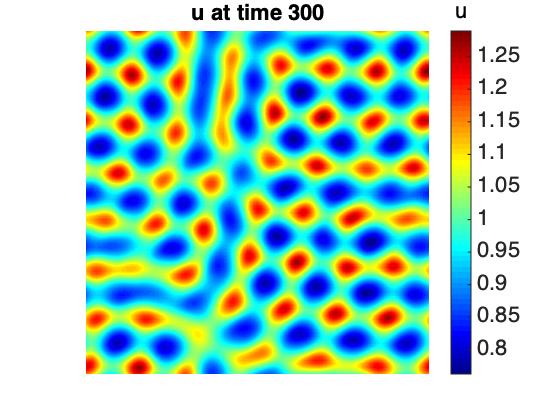}}
\subfigure[ $u$ at time 400 ]{\includegraphics[width=0.22\textwidth]{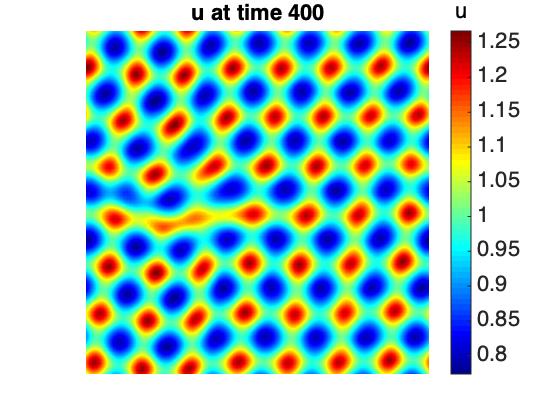}}\\

\subfigure[ $u$ at time 500 ]{\includegraphics[width=0.22\textwidth]{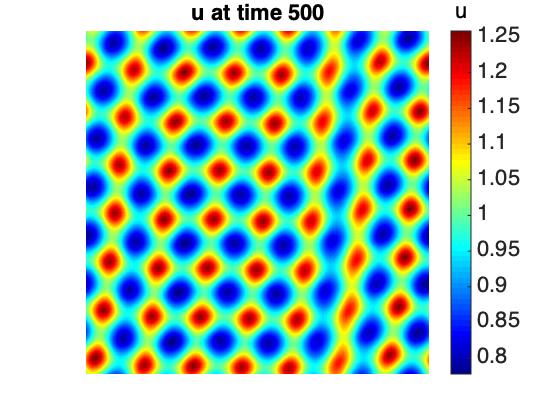}}
\subfigure[ $u$ at time 600 ]{\includegraphics[width=0.22\textwidth]{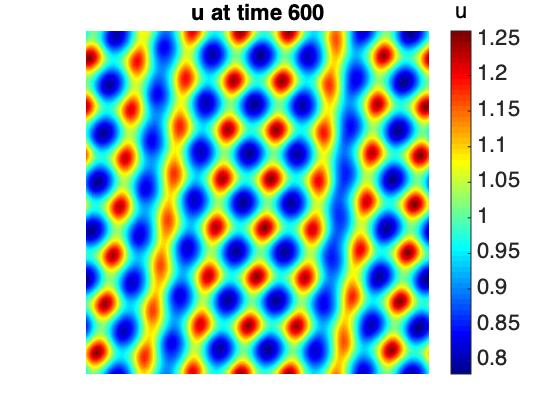}}
\subfigure[ $u$ at time 700 ]{\includegraphics[width=0.22\textwidth]{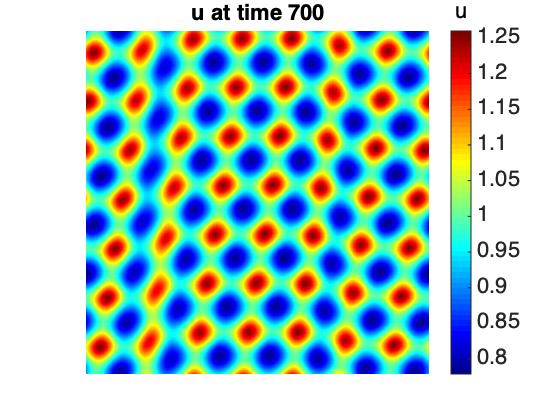}}
\subfigure[ $u$ at time 800 ]{\includegraphics[width=0.22\textwidth]{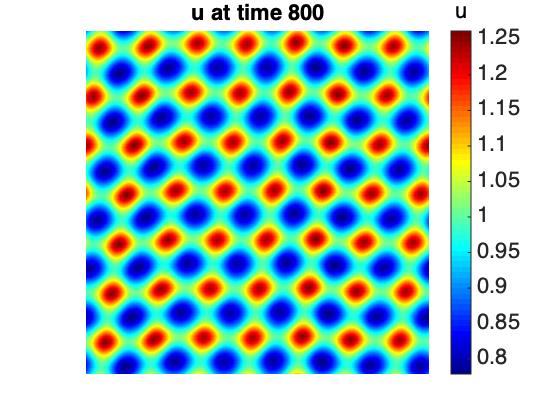}}\\

\caption{The dynamic process for  bacterial density  with large initial distribution and $s=-15$, $g=0.1$.}
\label{RE_p1_1u}
\end{figure}

\begin{figure}[H]
\centering
\subfigure[$c$ at time 50]{\includegraphics[width=0.22\textwidth]{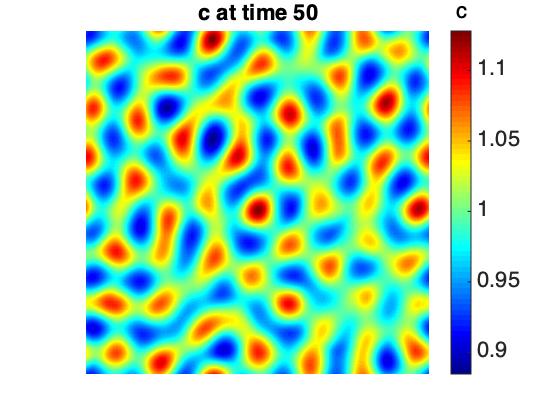}}
\subfigure[$c$ at time 200]{\includegraphics[width=0.22\textwidth]{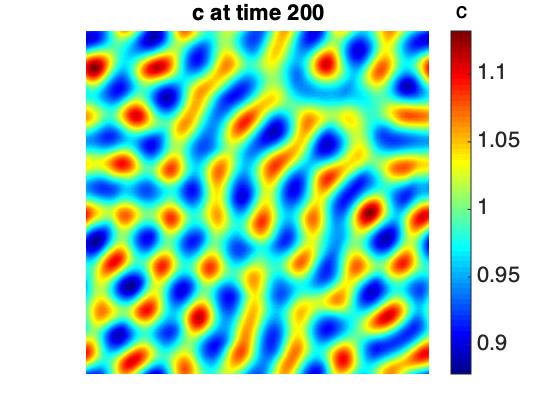}}
\subfigure[$c$ at time 300]{\includegraphics[width=0.22\textwidth]{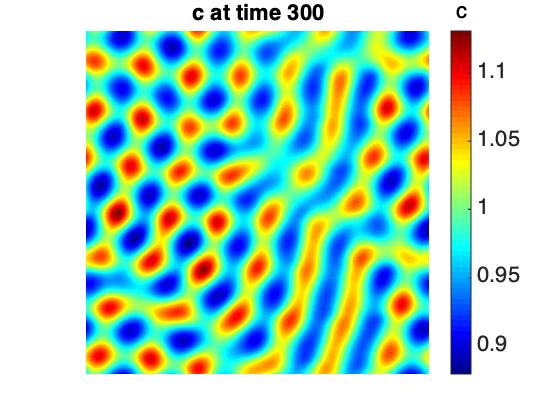}}
\subfigure[$c$ at time 400]{\includegraphics[width=0.22\textwidth]{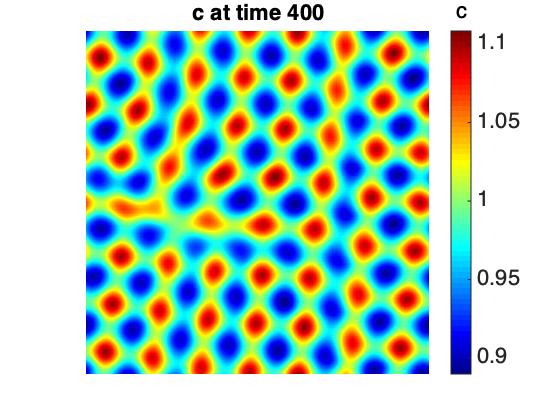}}\\

\subfigure[$c$ at time 500]{\includegraphics[width=0.22\textwidth]{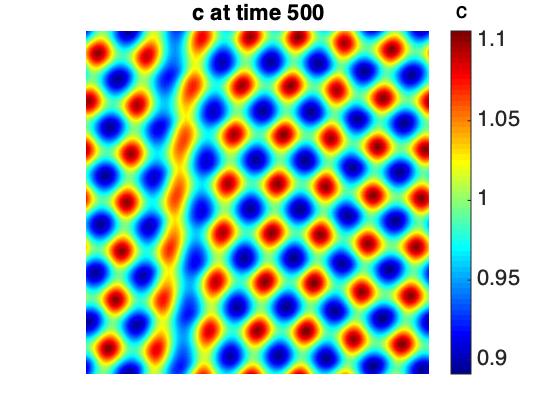}}
\subfigure[$c$ at time 600]{\includegraphics[width=0.22\textwidth]{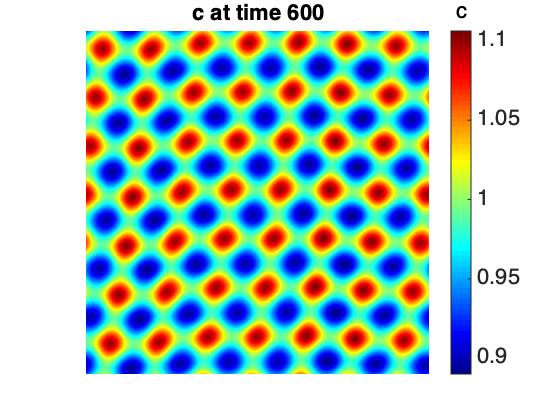}}
\subfigure[$c$ at time 700]{\includegraphics[width=0.22\textwidth]{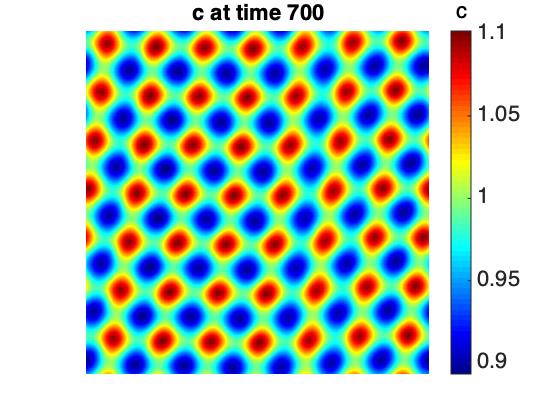}}
\subfigure[$c$ at time 800]{\includegraphics[width=0.22\textwidth]{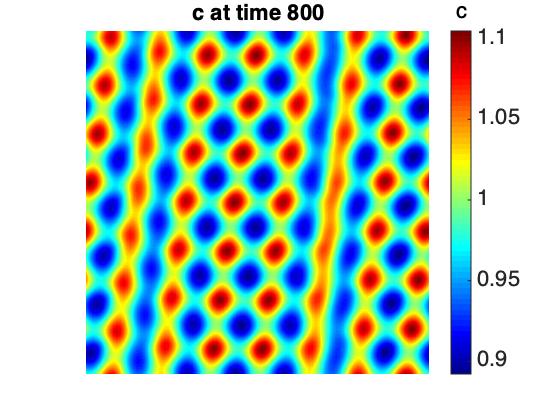}}\\
\caption{The dynamic process for density of self-secreted chemical  with large initial distribution and $s=-15$, $g=0.1$.}
\label{RE_p1_1c}
\end{figure}

\begin{figure}[H]
\centering
\subfigure[${\bf{p}}$ at time 50 ]{\includegraphics[width=0.22\textwidth]{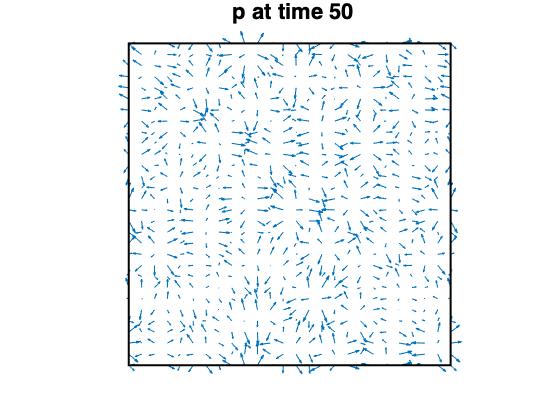}}
\subfigure[${\bf{p}}$ at time 200 ]{\includegraphics[width=0.22\textwidth]{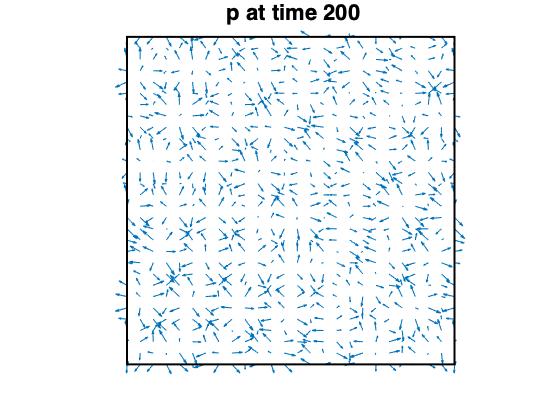}}
\subfigure[${\bf{p}}$ at time 300 ]{\includegraphics[width=0.22\textwidth]{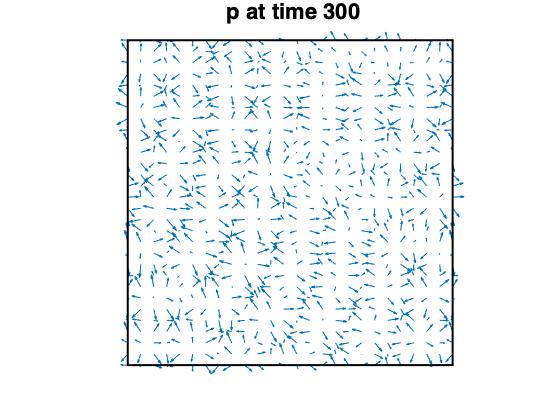}}
\subfigure[${\bf{p}}$ at time 400 ]{\includegraphics[width=0.22\textwidth]{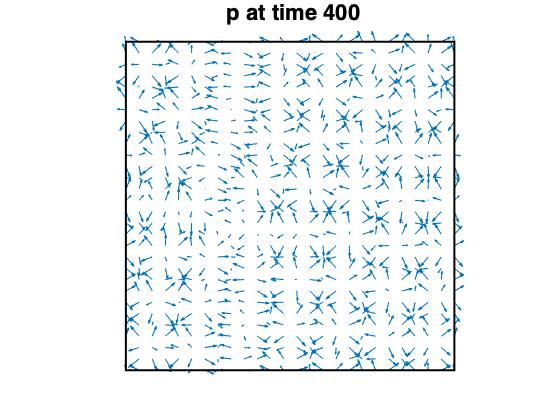}}\\

\subfigure[${\bf{p}}$ at time 500 ]{\includegraphics[width=0.22\textwidth]{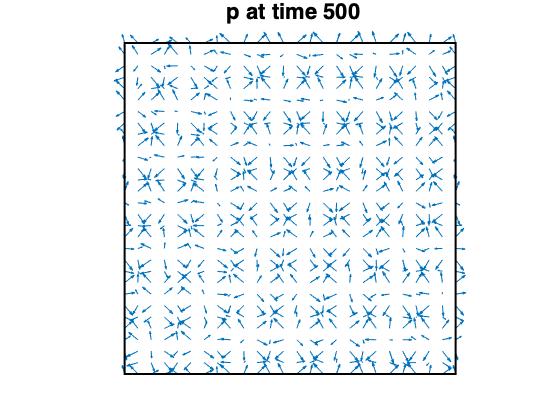}}
\subfigure[${\bf{p}}$ at time 600 ]{\includegraphics[width=0.22\textwidth]{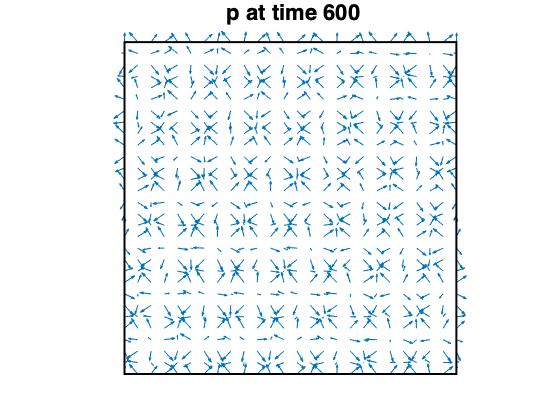}}
\subfigure[${\bf{p}}$ at time 700 ]{\includegraphics[width=0.22\textwidth]{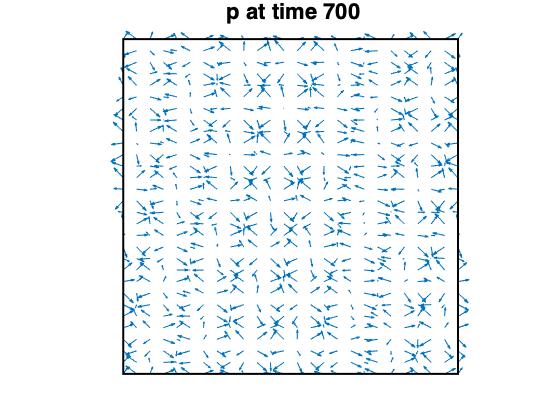}}
\subfigure[${\bf{p}}$ at time 800 ]{\includegraphics[width=0.22\textwidth]{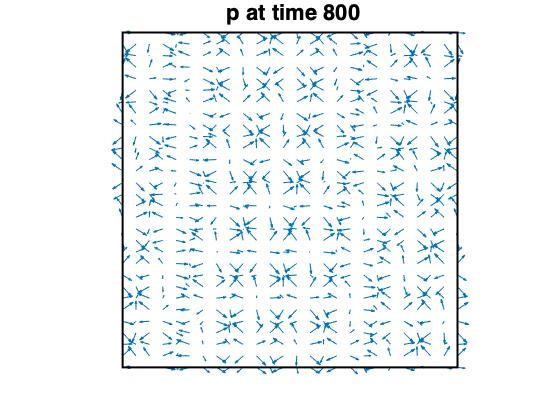}}\\

\caption{The dynamic process for polarization with large initial distribution and $s=-15$, $g=0.1$.}
\label{RE_p1_1p}
\end{figure}

\subsection{Chemorepulsion case 3}

In this subsection, the initial distribution and all the parameters are given as chemorepulsion case 1, except $s=-25$, $g=1$. The evolution of the process is given as  in Fig.\ref{RE2_p1u}-\ref{RE2_p1p}. We can find that clustering and wave pattern formation appear, and finally both of them form one  special order  and arrive at the  equilibrium state.
Comparing with case 1, the value of $g$ increases to 10 times. Which makes the diameter of clustering is smaller and the time of wave pattern formation appear early. The clustering form one line along the diagonal direction before the wave appear. And in the counter diagonal direction, part of the clustering gathering in the strip as represented in Fig.\ref{RE2_p1u}e-h.

\begin{figure}[H]
\centering
\subfigure[ $u$ at time 10 ]{\includegraphics[width=0.22\textwidth]{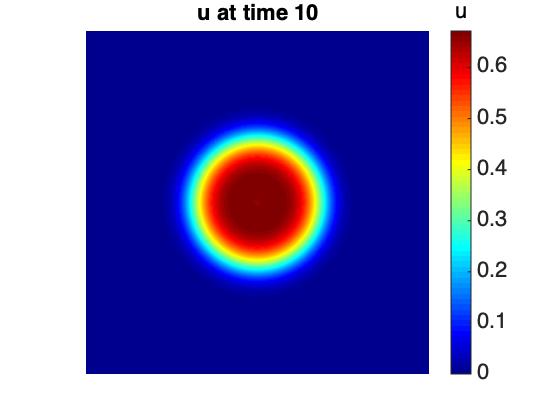}}
\subfigure[ $u$ at time 30 ]{\includegraphics[width=0.22\textwidth]{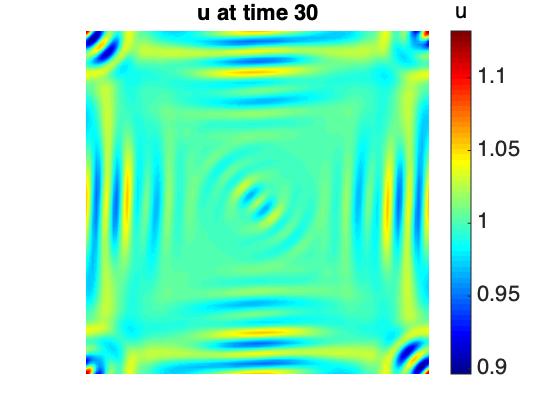}}
\subfigure[ $u$ at time 50 ]{\includegraphics[width=0.22\textwidth]{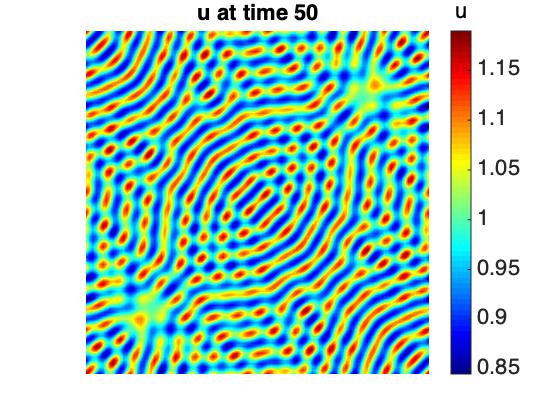}}
\subfigure[ $u$ at time 70 ]{\includegraphics[width=0.22\textwidth]{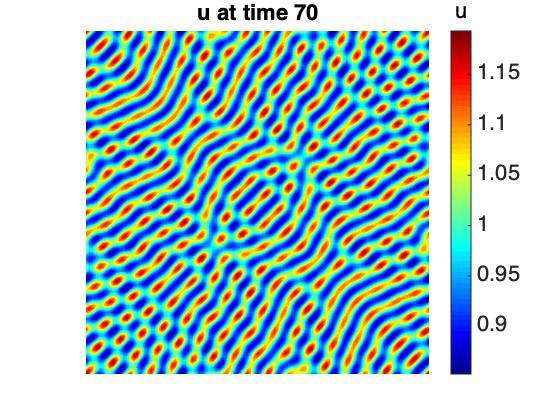}}\\

\subfigure[ $u$ at time 150 ]{\includegraphics[width=0.22\textwidth]{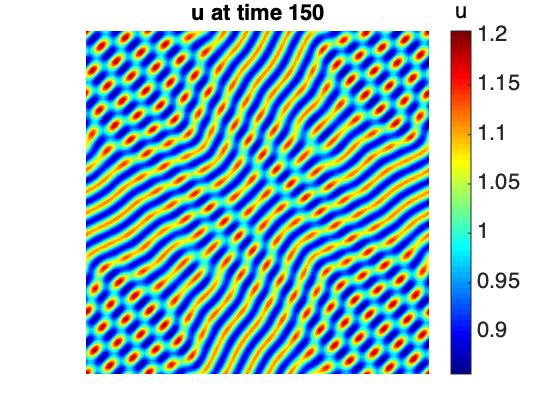}}
\subfigure[ $u$ at time 300 ]{\includegraphics[width=0.22\textwidth]{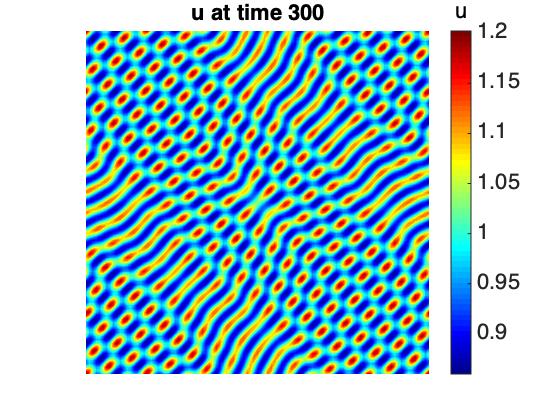}}
\subfigure[ $u$ at time 700 ]{\includegraphics[width=0.22\textwidth]{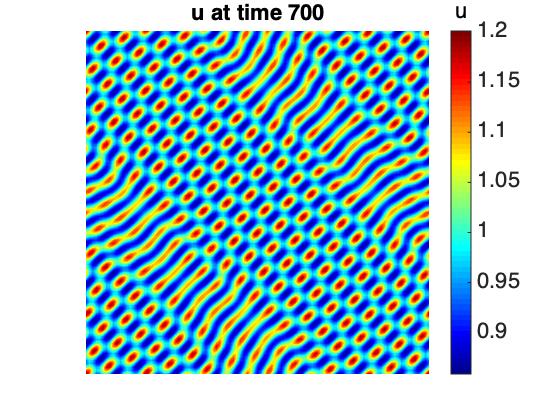}}
\subfigure[ $u$ at time 800 ]{\includegraphics[width=0.22\textwidth]{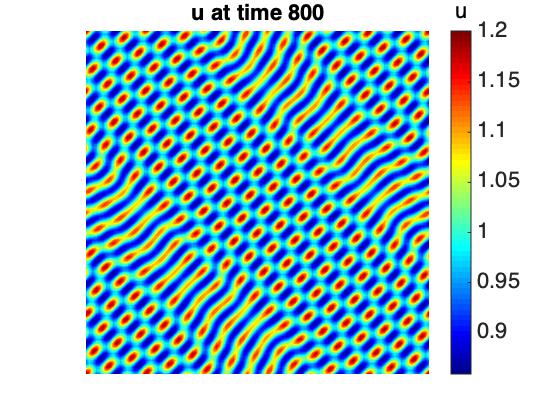}}\\

\caption{The dynamic process for bacterial density with small initial distribution and  $s=-25$, $g=1$.}
\label{RE2_p1u}
\end{figure}

\begin{figure}[H]
\centering
\subfigure[$c$ at time 10]{\includegraphics[width=0.22\textwidth]{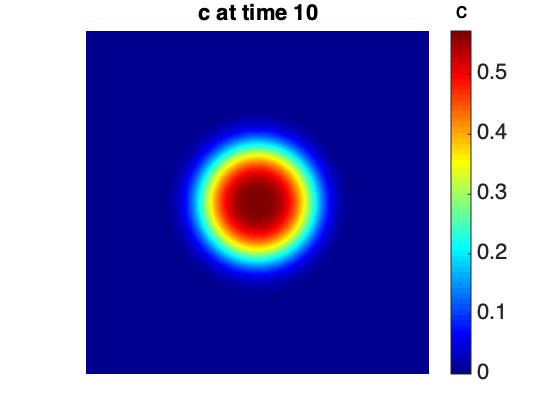}}
\subfigure[$c$ at time 30]{\includegraphics[width=0.22\textwidth]{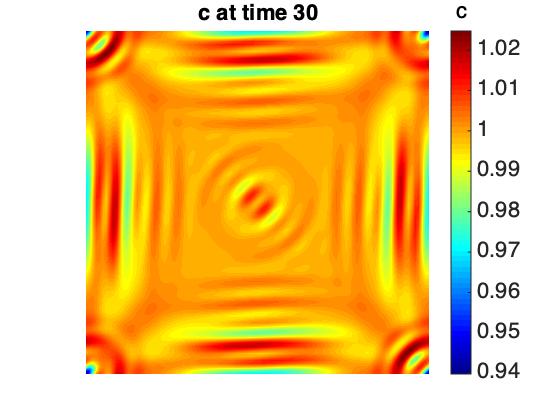}}
\subfigure[$c$ at time 50]{\includegraphics[width=0.22\textwidth]{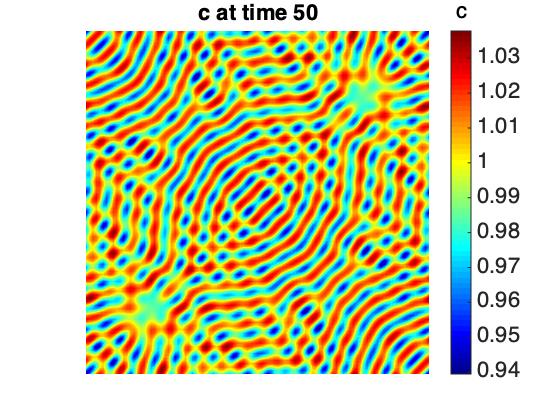}}
\subfigure[$c$ at time 70]{\includegraphics[width=0.22\textwidth]{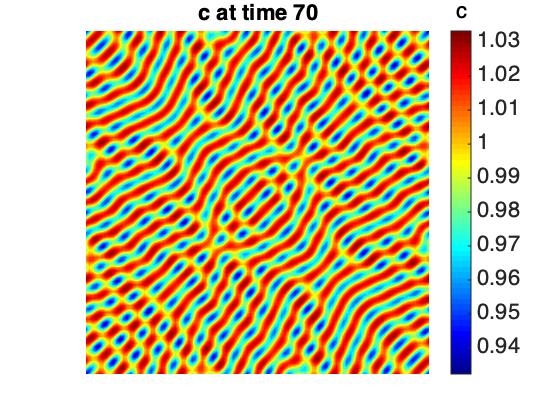}}\\

\subfigure[$c$ at time 150]{\includegraphics[width=0.22\textwidth]{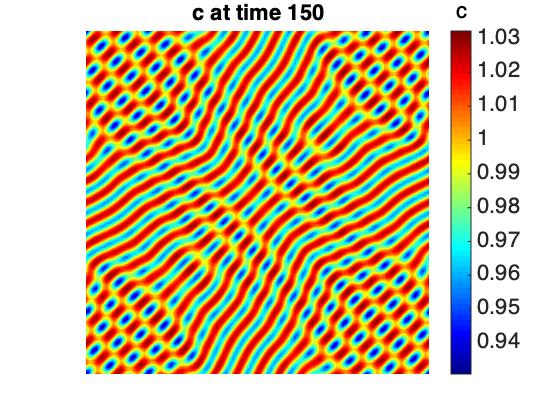}}
\subfigure[$c$ at time 300]{\includegraphics[width=0.22\textwidth]{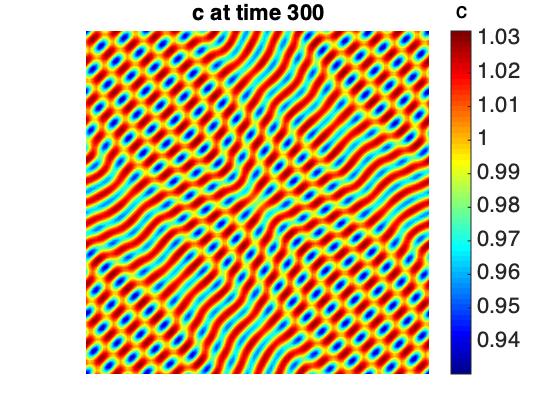}}
\subfigure[$c$ at time 700]{\includegraphics[width=0.22\textwidth]{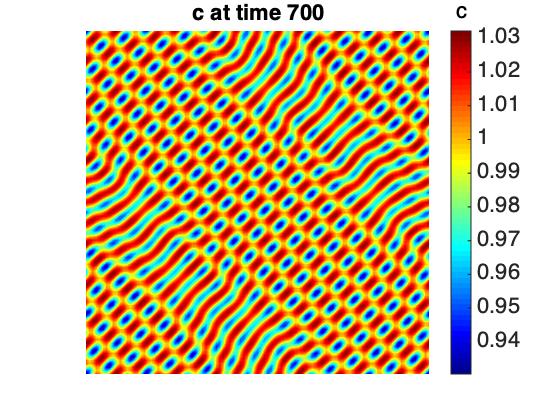}}
\subfigure[$c$ at time 800]{\includegraphics[width=0.22\textwidth]{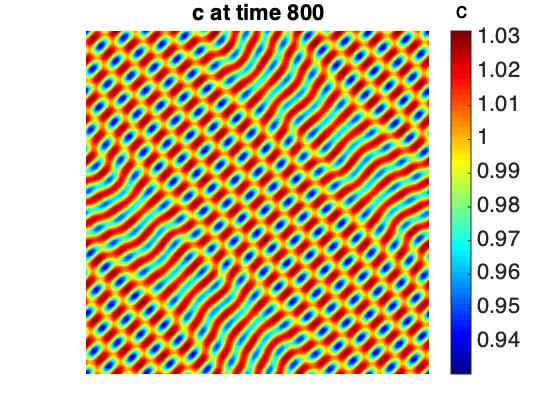}}\\
\caption{The dynamic process for density of self-secreted chemical  with small initial distribution and  $s=-25$, $g=1$.}
\label{RE2_p1c}
\end{figure}

\begin{figure}[H]
\centering
\subfigure[${\bf{p}}$ at time 10 ]{\includegraphics[width=0.22\textwidth]{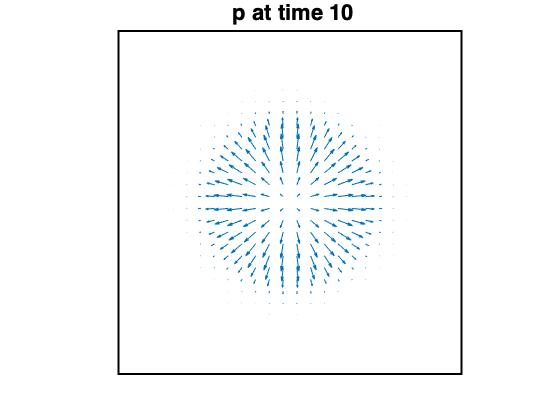}}
\subfigure[${\bf{p}}$ at time 30 ]{\includegraphics[width=0.22\textwidth]{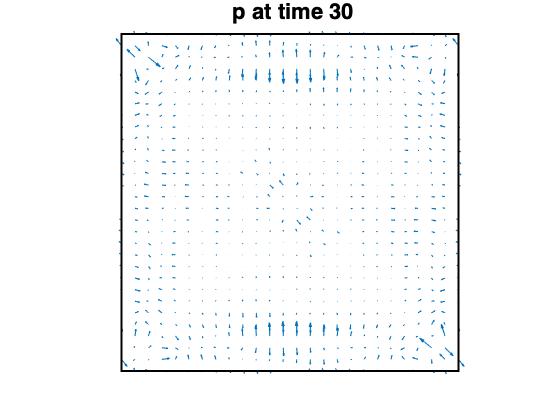}}
\subfigure[${\bf{p}}$ at time 50 ]{\includegraphics[width=0.22\textwidth]{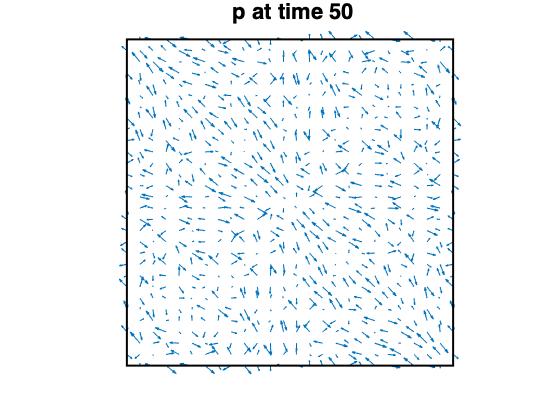}}
\subfigure[${\bf{p}}$ at time 70 ]{\includegraphics[width=0.22\textwidth]{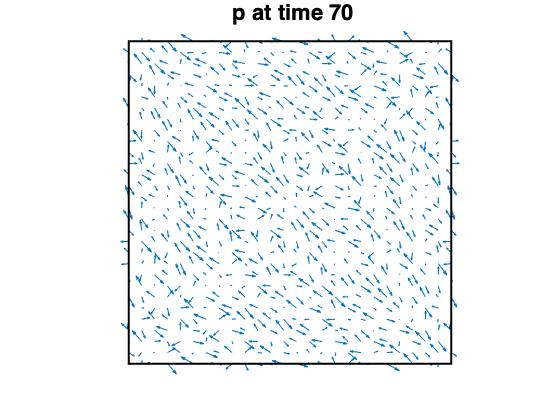}}\\

\subfigure[${\bf{p}}$ at time 150 ]{\includegraphics[width=0.22\textwidth]{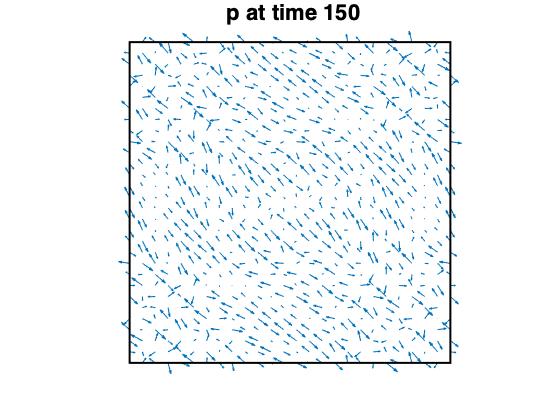}}
\subfigure[${\bf{p}}$ at time 300 ]{\includegraphics[width=0.22\textwidth]{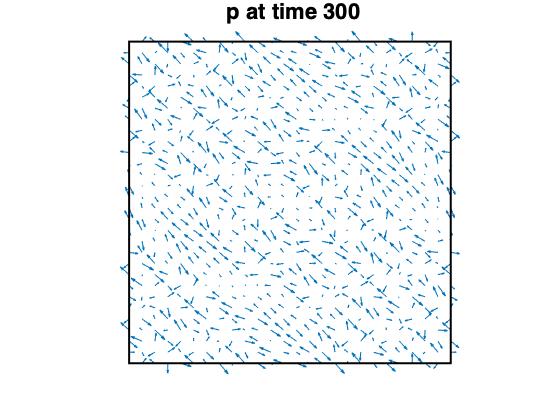}}
\subfigure[${\bf{p}}$ at time 700 ]{\includegraphics[width=0.22\textwidth]{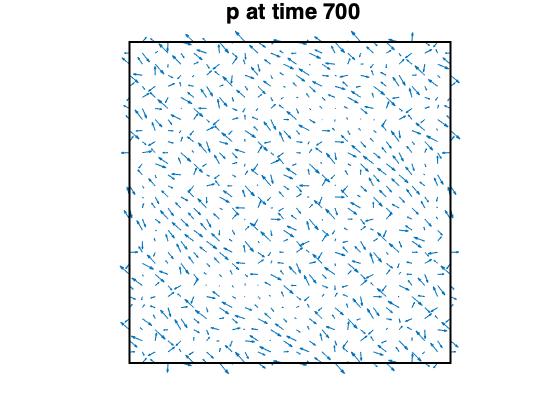}}
\subfigure[${\bf{p}}$ at time 800 ]{\includegraphics[width=0.22\textwidth]{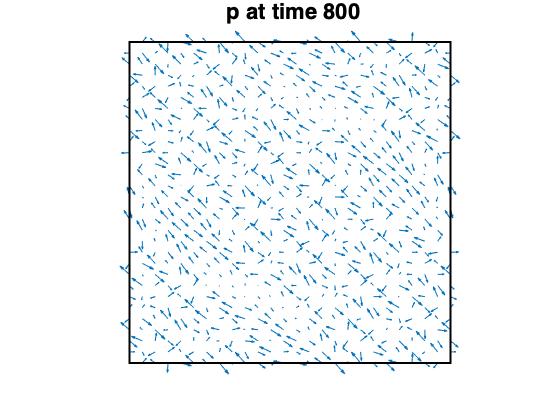}}\\

\caption{The dynamic process for polarization with small initial distribution and  $s=-25$, $g=1$.}
\label{RE2_p1p}
\end{figure}

\subsection{Chemorepulsion case 4}

For simulating the  chemorepulsion case, here we choose the initial condition with a small perturbation of the uniform state $(1,1,{\bf{0}})$ which is the same as chemorepulsion case 2. 
The value of parameters is the same as chemorepulsion case 3. The evolution of the process is obtained as in Fig.\ref{RE2_p1_1u}-\ref{RE2_p1_1p}. We can also find that clustering and wave pattern formation appear, and finally reach the equilibrium state. Comparing with case 3, at the beginning  the bacteria gather in the clustering, then quickly form the strip, and the wave pattern formation arise. In the equilibrium state, the strip pattern formation along the counter diagonal direction could break up. Which means the bacteria far away the breaking point. This is the repulsion due to the self-secreted chemical. Which confirms the theoretical analysis  in the physical background as presented in \cite{MP2018, Liebchen2015}.

\begin{figure}[H]
\centering
\subfigure[ $u$ at time 10 ]{\includegraphics[width=0.22\textwidth]{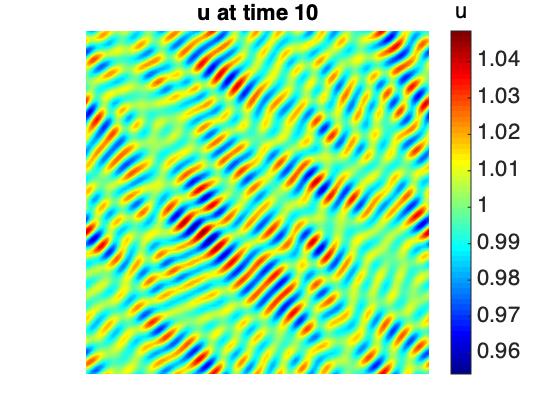}}
\subfigure[ $u$ at time 30 ]{\includegraphics[width=0.22\textwidth]{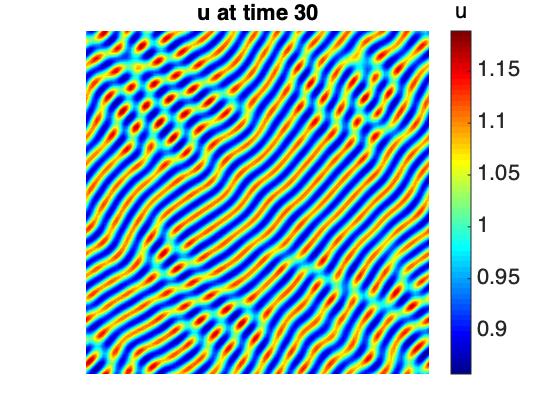}}
\subfigure[ $u$ at time 50 ]{\includegraphics[width=0.22\textwidth]{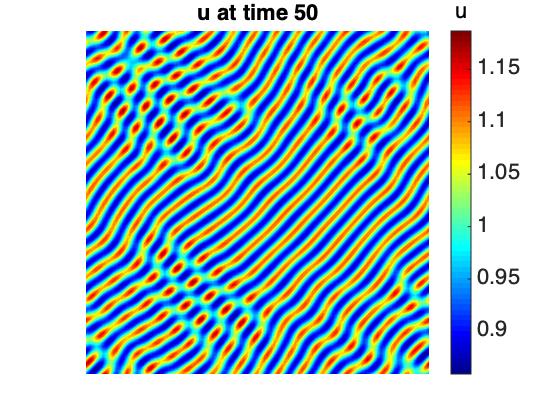}}
\subfigure[ $u$ at time 70 ]{\includegraphics[width=0.22\textwidth]{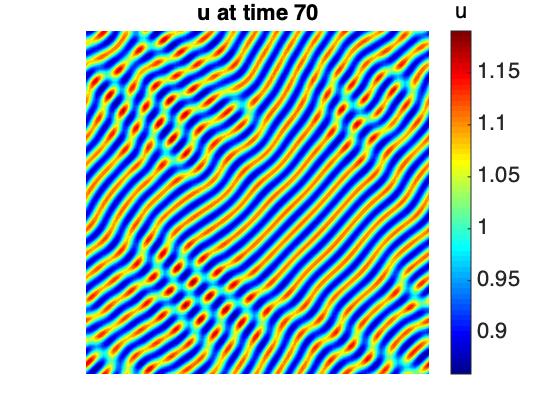}}\\

\subfigure[ $u$ at time 150 ]{\includegraphics[width=0.22\textwidth]{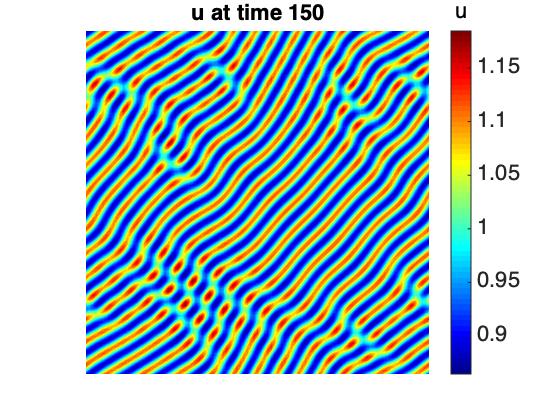}}
\subfigure[ $u$ at time 300 ]{\includegraphics[width=0.22\textwidth]{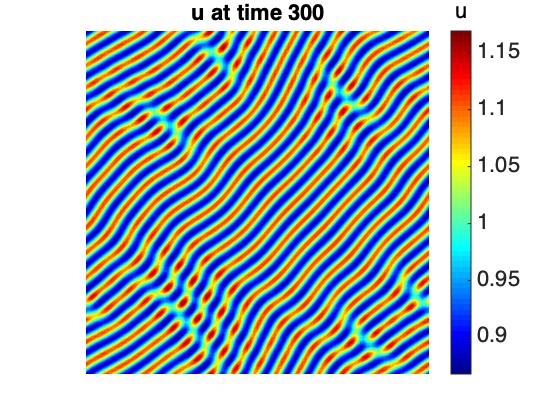}}
\subfigure[ $u$ at time 700 ]{\includegraphics[width=0.22\textwidth]{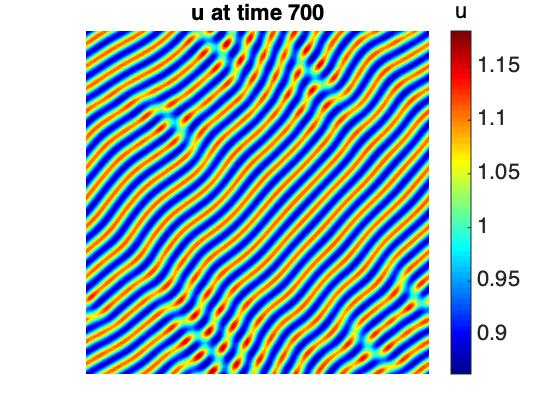}}
\subfigure[ $u$ at time 800 ]{\includegraphics[width=0.22\textwidth]{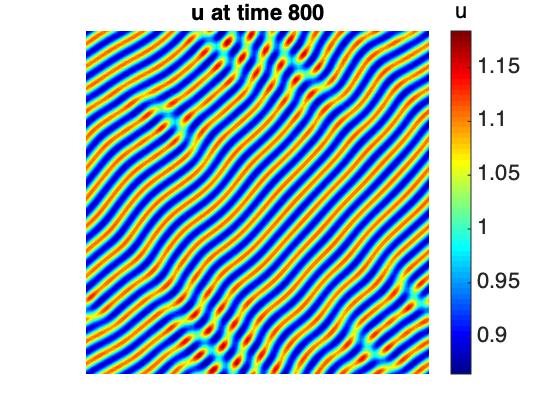}}\\

\caption{The dynamic process for
bacterial density with large initial distribution and  $s=-25$, $g=1$.}
\label{RE2_p1_1u}
\end{figure}

\begin{figure}[H]
\centering
\subfigure[$c$ at time 10]{\includegraphics[width=0.22\textwidth]{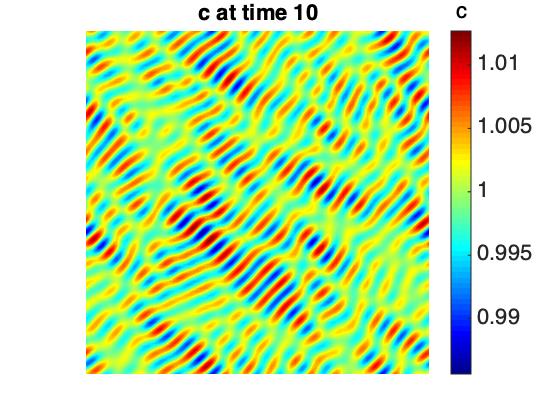}}
\subfigure[$c$ at time 30]{\includegraphics[width=0.22\textwidth]{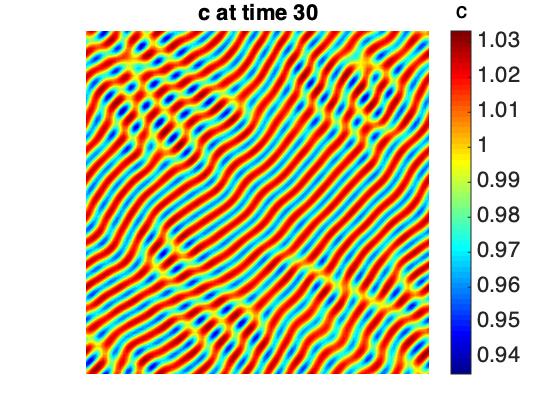}}
\subfigure[$c$ at time 50]{\includegraphics[width=0.22\textwidth]{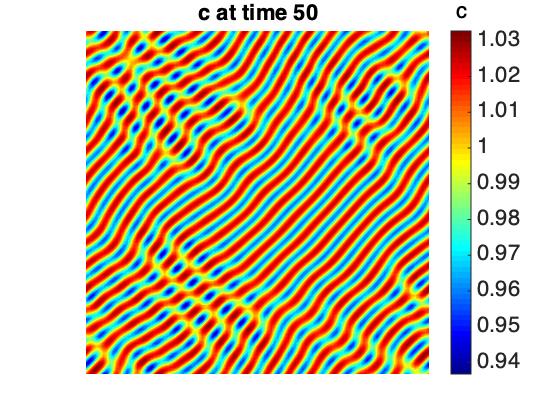}}
\subfigure[$c$ at time 70]{\includegraphics[width=0.22\textwidth]{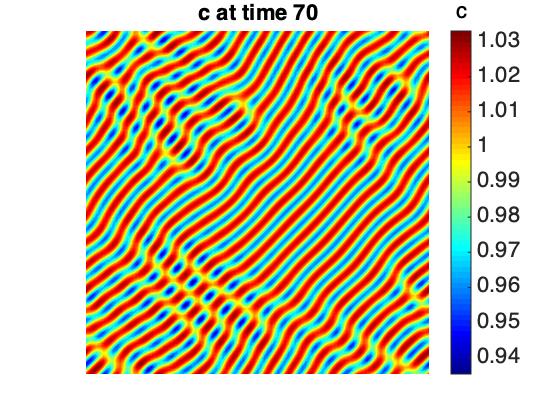}}\\

\subfigure[$c$ at time 150]{\includegraphics[width=0.22\textwidth]{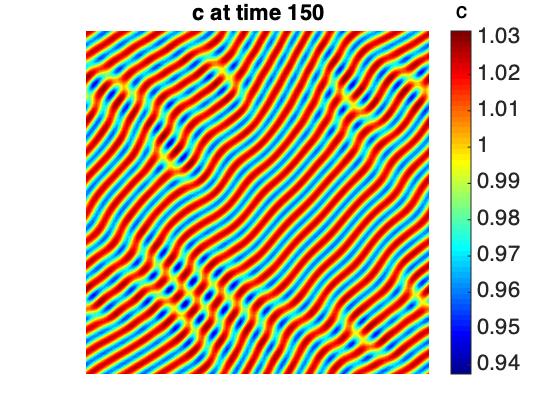}}
\subfigure[$c$ at time 300]{\includegraphics[width=0.22\textwidth]{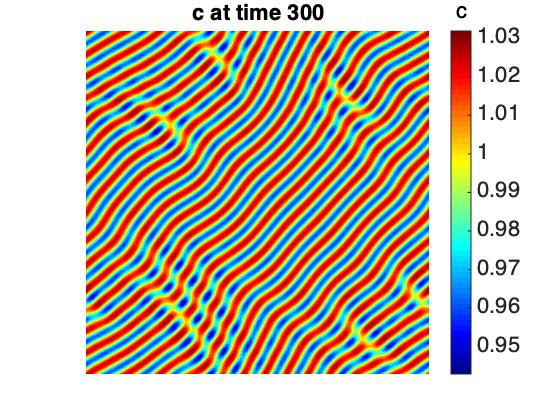}}
\subfigure[$c$ at time 700]{\includegraphics[width=0.22\textwidth]{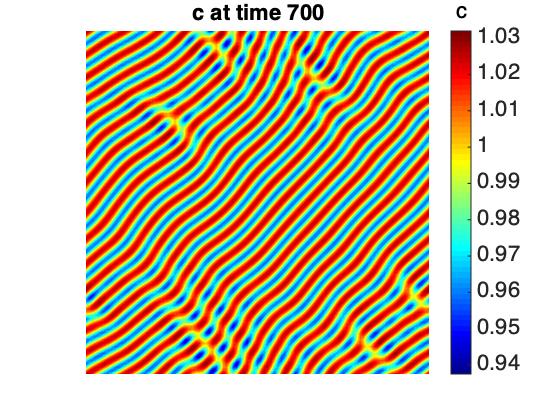}}
\subfigure[$c$ at time 800]{\includegraphics[width=0.22\textwidth]{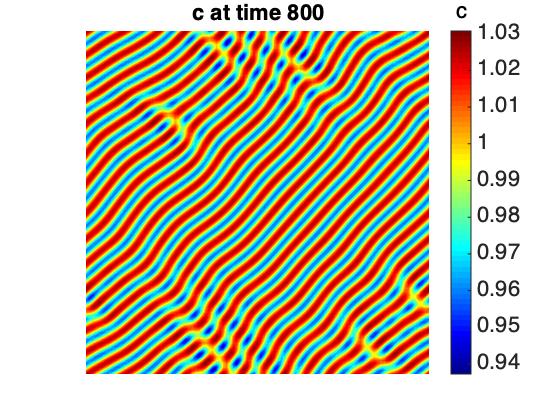}}\\
\caption{The dynamic process for density of self-secreted chemical  with large initial distribution and  $s=-25$, $g=1$.}
\label{RE2_p1_1c}
\end{figure}

\begin{figure}[H]
\centering
\subfigure[${\bf{p}}$ at time 10 ]{\includegraphics[width=0.22\textwidth]{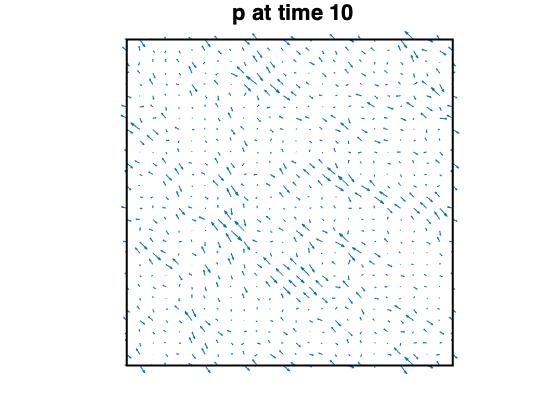}}
\subfigure[${\bf{p}}$ at time 30 ]{\includegraphics[width=0.22\textwidth]{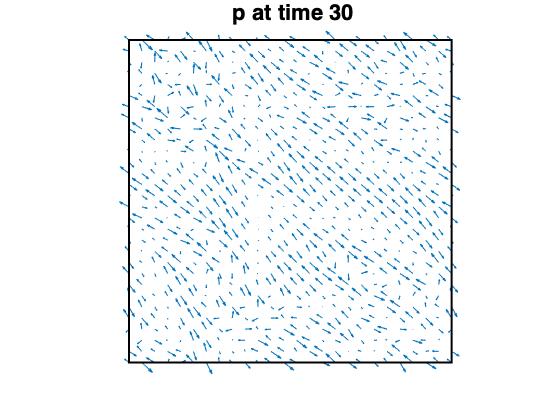}}
\subfigure[${\bf{p}}$ at time 50 ]{\includegraphics[width=0.22\textwidth]{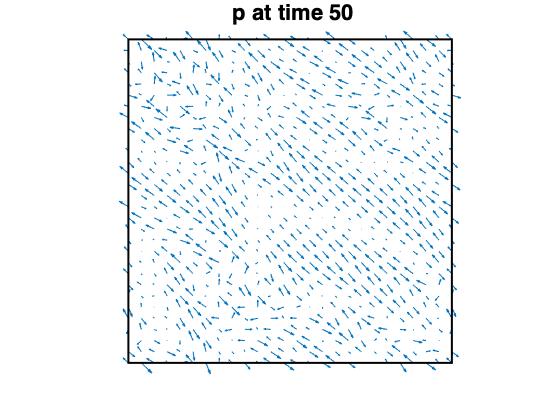}}
\subfigure[${\bf{p}}$ at time 70 ]{\includegraphics[width=0.22\textwidth]{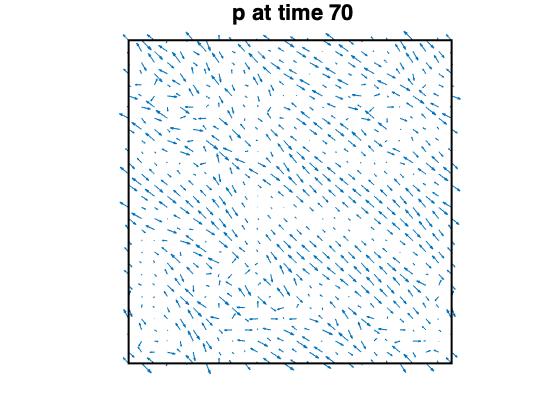}}\\

\subfigure[${\bf{p}}$ at time 150 ]{\includegraphics[width=0.22\textwidth]{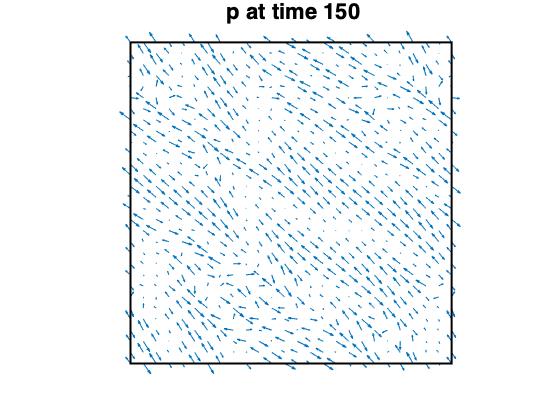}}
\subfigure[${\bf{p}}$ at time 300 ]{\includegraphics[width=0.22\textwidth]{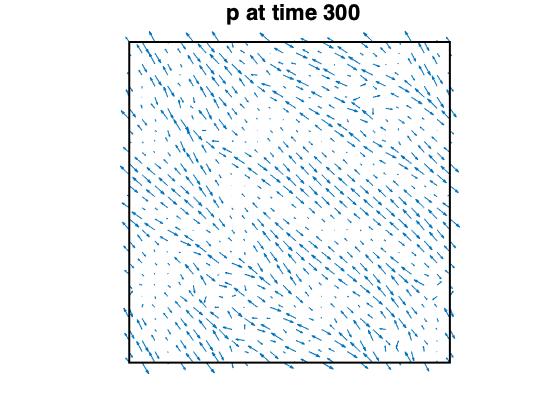}}
\subfigure[${\bf{p}}$ at time 700 ]{\includegraphics[width=0.22\textwidth]{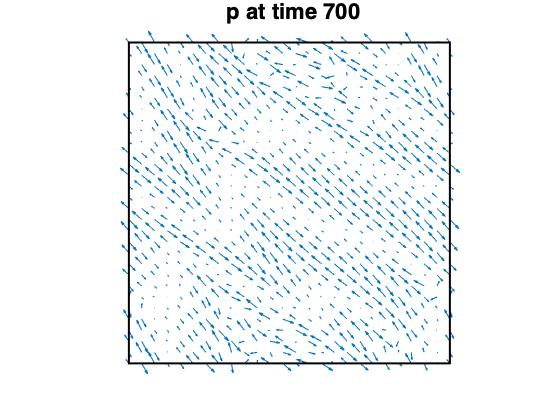}}
\subfigure[${\bf{p}}$ at time 800 ]{\includegraphics[width=0.22\textwidth]{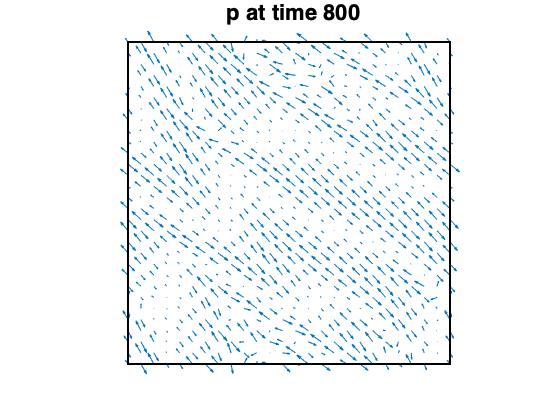}}\\

\caption{The dynamic process for polarization with large initial distribution and  $s=-25$, $g=1$.}
\label{RE2_p1_1p}
\end{figure}

\section{Conclusion}
For simulating growth-mediated autochemotactic pattern formation in self-propelling bacteria, we propose one   combined modified characteristic Galerkin finite element method.  In this method, the discrete system is divided into three separated symmetric positive definite sub-problems, and  keeps  mass balance wholly. The convergence of the proposed method is studied and the error estimate is also derived. We present four chemorepulsion cases. Under the small value of grow rate, before the wave pattern formation arise, the clustering and strip appear. In the large value of grow rate, the  bacteria gather and quickly move to the wave pattern formation and arrive at the equilibrium state. In this process, the effect of repulsion from self-secreted  chemical  and the initial distribution also affect the structure of the pattern before wave arising. Which confirms the theoretical analysis  in the different regimes and also brings up new results for expanding the physical mechanism.

\section*{Acknowledgments}

 J. Zhang's work was supported by the Fundamental Research Funds for the Central Universities (20CX05011A). M. Jiang’s work was supported partially by the Natural Science Foundation of Shandong 
Province(Grant number ZR2021QA018).  J. Zhu's work was partially supported by the National Council for Scientific and Technological Development of Brazil (CNPq).  X. Yu's work was supported partially by the National Natural Science Foundation of China (Grant No. 12071046) . L. Bevilacqua's work was supported partially by CNPq/TWAS Grant, the COPPE/CAPES Grant 001, and the USP/IEA visiting research program. 



\begin{thebibliography}{00}
\bibitem{MP2018}
M. Mukherjee, Pushpita Ghosh,
 Growth-mediated autochemotactic pattern formation in self-propelling bacteria, Physical Review E 97, 012413 (2018).


\bibitem{Toner1995}
 J. Toner, Y. Tu, Long-Range Order in a Two-Dimensional Dynamical XY Model: How Birds Fly Together, Phys. Rev. Lett., 75, 4326(1995).
 
\bibitem{Budrene1991}
  E. O. Budrene, H. C. Berg, Generic modelling of cooperative growth patterns in bacterial colonies, Nature, 349, 630(1991).
  
\bibitem{HubbardEM2004}
  S. Hubbard, P. Babak, S. Sigurdsson and K. Magnusson, A model of the formation of fish schools and migrations of fish, Ecol. Modell., 174, 359(2004).
  
\bibitem{RauchPLA1995}
  E. Rauch, M. Millonas and D. Chialvo, Pattern formation and functionality in swarm models, Phys. Lett. A, 207, 185(1995).
  
\bibitem{Liebchen2015}
B. Liebchen, D. Marenduzzo, I. Pagonabarraga and M. E. Cates, Clustering and Pattern Formation in Chemorepulsive Active Colloids, Phys. Rev. Lett. 115, 258301(2015).

\bibitem{Liebchen2017}
B. Liebchen, D. Marenduzzo and M. E. Cates, Phys. Rev. Lett. 118, 268001 (2017).


\bibitem{Zhao2016}
J. Zhao, X. Yang, J. Li and Q. Wang, Energy stable numerical schemes for a hydrodynamic model of nematic liquid crystals, SIAM Journal on Scientific Computing,38(5):3264-3290(2016).

\bibitem{Zhao2017}
J. Zhao, X. Yang, Y. Gong and Q. Wang, A novel linear second order unconditionally energy stable scheme for a hydrodynamic Q-tensor model of liquid crystals. Comput. Methods Appl. Mech. Eng. 318, 803-825 (2017).

\bibitem{Yang2017}
X. Yang and D. Han Linearly first- and second-order, unconditionally energy stable schemes for the phase field crystal equation. J. Comput. Phys. 330, 1116-1134 (2017)

\bibitem{Shen2015}
J. Shen and X. Yang, Decoupled, energy stable schemes for phase-field models of two-phase incompressible
flows. SIAM J. Numer. Anal. 53(1), 279-296 (2015).

\bibitem{Gong2020}
Y. Gong, J. Zhao and Q. Wang, Arbitrarily high-order unconditionally energy stable schemes for thermodynamically
consistent gradient flow models, SIAM J. Sci. Comput. 42, 135-156, (2020).

\bibitem{Guo2017}
Z. Guo, P. Lin, J. Lowengrub and S.M. Wise Mass conservative and energy stable finite difference methods for the quasi-incompressible Navier-Stokes-Cahn-Hilliard system: Primitive variable and projection-type schemes, Comput. Methods. Appl. Mech. Engrg. 326,144-174,(2017)
\bibitem{Jiang2022}
M. Jiang, Z. Zhang and J. Zhao,
Improving the Accuracy and Consistency of the Scalar Auxiliary Variable(SAV) Method with Relaxation,Journal of Computational Physics,J. Comput. Phys., 456, 110954(2022).

\bibitem{Bell1988}
J.B. Bell, C.N. Dawson and G.R. Shubin, An unsplit high-order Godunov scheme for scalar conservation laws in two dimensions, J. Comput. Phys. 74, 1-24(1988).

\bibitem{Johnson1986}
C. Johnson, Streamline diffusion methods for problems in fluid mechanics, in: Finite Element in Fluids VI, Wiley, New York, 1986.

\bibitem{YANG2000}
D.P. Yang, Analysis of least-squares mixed finite element methods for nonlinear nonstationary
convection-diffusion problems, Math. Comput., 69, 929-963(2000).

\bibitem{dou}
J. Douglas Jr and T.F. Russell, Numerical methods for convection-dominated diffusion problems based on combining the method of  characteristics with finite element or finite difference procedures, SIAM J. Numer. Anal. 19, 871-885(1982).

\bibitem{zhang2016}
J.S. Zhang, J. Zhu and R. Zhang, Characteristic splitting mixed finite element analysis of Keller-Segel chemotaxis models, Appl. Math. Comput. 278, 33-44(2016).

\bibitem{rui2010}
H. Rui and M. Tabata, A mass-conservative finite element scheme for convection-diffusion problems, J. Sci. Comput. 43, 416-432(2010).

\bibitem{Jiang2019}
M. Jiang, J. Zhang, J. Zhu, X. Yu and L. Bevilacqua, Characteristic finite element analysis of pattern formation dynamical model in polymerizing actin flocks, App. Math. Lett. 98, 224-232(2019).
\bibitem{Jiang2021}
M. Jiang, J. Zhang, J. Zhu, X. Yu and L. Bevilacqua, Numerical simulation for clustering and pattern formation in active colloids with mass-preserving characteristic finite element method, Com. Meth. App. Mech. Eng. 381, 113806(2021).

\bibitem{PGC}
{ P.G. Ciarlet}, { The finite element methods for elliptic problems.} North-Holland, New York, 1978.

\end{thebibliography}
\end{document}